\newtheorem{theorem}{Theorem}[section]
\newtheorem{lemma}[theorem]{Lemma}
\newtheorem{corollary}[theorem]{Corollary}
\newtheorem{claim}[theorem]{Claim}
\theoremstyle{definition}
\newtheorem{definition}[theorem]{Definition}
\newtheorem{question}[theorem]{Question}
\theoremstyle{remark}
\newcommand{\defeq}{:=}
\newcommand{\one}{\mathbf{1}}
\newcommand{\E}[1]{\mathbf{E}\left[#1\right]}
\newcommand{\Var}[1]{\mathbf{Var}\left[#1\right]}
\newcommand{\Cov}[1]{\mathbf{Cov}\left[#1\right]}
\newcommand{\dif}{D}
\newcommand{\eps}{\varepsilon}
\newcommand{\adj}{\sim}
\newcommand{\Bin}{\operatorname{Bin}}
\newcommand{\adv}{\Delta}
\newcommand{\Clr}{\ell}
\newcommand{\bClr}{\widehat{\ell}}
\newcommand{\bR}{\widehat{R}}
\newcommand{\bB}{\widehat{B}}
\newcommand{\bdif}{\widehat{\dif}}
\newcommand{\R}{\mathbb{R}}
\newcommand{\N}{\mathbb{N}}
\renewcommand{\epsilon}{\eps}
\renewcommand{\phi}{\varphi}
\renewcommand{\iff}{\Leftrightarrow}
\renewcommand{\Pr}{{\mathbf P}}
\title{A new density limit for unanimity in majority dynamics on random graphs}
\author{Jeong Han Kim and BaoLinh Tran}
\date{June 2022}
\begin{document}

\maketitle

\begin{abstract}
Majority dynamics is a process on a simple, undirected graph $G$ with an initial Red/Blue color for every vertex of $G$.
Each day, each vertex updates its color following the majority among its neighbors, using its previous color for tie-breaking.
The dynamics 
achieves {\em unanimity}
if every vertex has the same color after finitely many days, and such color is said to \emph{win}.

When $G$ is a $G(n,p)$ random graph, L. Tran and Vu (2019) found a codition
in terms of $p$ and the initial difference $2\adv$ beteween the sizes of the Red and Blue camps, such that unanimity is achieved with probability arbitrarily close to 1.
They showed that if $p\adv^2 \gg1 $, $p\adv \geq 100$, and $p\geq (1+\eps) n^{-1}\log n$ for a positive constant $\eps$, then unanimity occurs with probability $1 - o(1)$.
If $p$ is not extremely small, namely $p > \log^{-1/16} n $, then Sah and Sawhney (2022) showed that the condition $p
\adv^2 \gg  1$ is sufficient. 

If 
$n^{-1}\log^2 n \ll p \ll n^{-1/2}\log^{1/4} n, $
we show that $p^{3/2}\adv \gg  n^{-1/2}\log n$ 
is enough. 
Since this condition holds if $p\adv \geq 100$ for $p$ in this range, this is an improvement of Tran's and Vu's result. 
For the closely related problem of finding the optimal condition for $p$ to achieve unanimity when the initial coloring is chosen uniformly at random among all possible Red/Blue assignments, our result implies a new lower bound $p \gg n^{-2/3}\log^{2/3} n$, which improves upon the previous bound of $n^{-3/5}
\log n$ by 
Chakraborti, Kim, Lee and T. Tran (2021).
\end{abstract}

\section{Introduction}

\subsection{Majority Dynamics}

Consider a parliamentary election with two parties.
The election season begins with Day 0, where each voter supports one side.
At the end of every day, each voter polls their friends, then supports the majority side the next morning, or keeps their affiliation in case of a tie.
On election day, everyone votes for the party they currently supports and the parliament will be divided proportionally with each side's number of votes.

This process can be modeled with a graph process, where vertices represent voters and edges represent friendships.
We let Red and Blue, or R and B, stand for the two parties.
The state of affiliations at any time is simply a coloring on the graph.
This process on a graph is called \textbf{Majority Dynamics} in literature, due to the majority-based updating rule.

There is a vast literature of more general \emph{opinion exchange} processes, with applications in many fields.
We refer to the influential survey by Mossel and Tamuz \cite{mosselTamuz2017} for an overview.
In this paper, we focus on Majority Dyanmics and study its final states.

While previous works \cite{fountoulakis, jhk-etal2021} used $\pm 1$ assignments in place of Red and Blue, we adopt the Red/Blue notation to be consistent with a recent paper by the second author and Vu \cite{TranVusparse}, which uses the following formal definition

\begin{definition}[Majority Dynamics]  \label{defn:majority-dynamics}
    Given simple undirected graph $G = (V, E)$, and a partition $V = R\cup B$, the \textbf{majority dynamics} on $G$ with initial sides $(R, B)$ is a process that begins with Day 0, where vertices in $R$ and $B$ are respectively colored Red and Blue.
    
    For each $t \ge 0$, on day $t + 1$,
    each vertex adopts the color the majority of its neighbors hold on day $t$, or remains unchanged in case of a tie.
\end{definition}



On theory, the process goes on forever, but a beautiful argument, independently discovered by Goles and Olivos \cite{goles1981} and Poljak and Sura \cite{poljak1983}, show that
after at most $|V|$ days,
the process either stabilizes or alternates forever between two colorings, so we can view them as the \emph{de facto} final states.
In the settings in this paper, the process will almost always ends in a special stable position called \textbf{unanimity}, where every vertex has the same color.
There is a large and active research area dedicated to finding the conditions under which unanimity occur.
In the next section we will introduce two main models in the literature, their relationship, and the main questions in each.

Let us first introduce extra notations to aid the presentation.
\begin{itemize}

    \item Given a coloring $(R, B)$ of $V$, there is a corresponding map $\Clr(R, B): V \to \{\pm 1\}$ such that $\Clr(R, B)(v) = 1$ if $v\in R$ and $-1$ if $v\in B$.
    When the coloring in the context is clear, we simply use $\Clr$.
    We will abuse the notation and treat $\Clr$ and $(R, B)$ as the same object.

    \item Given $G = (V, E)$, and a coloring $(R, B)$ of $V$, we denote the coloring on day $t$ by $(R_t, B_t)$ and denote $\Clr_t = \Clr(R_t, B_t)$.
    We will state clearly when we do not follow this convention.

    \item Given a coloring map $\Clr$, we define the reverse notations $R(\Clr) \defeq \Clr^{-1}(1)$ and $B(\Clr) \defeq \Clr^{-1}(-1)$.
    This makes $R_t = R(\Clr_t)$ and $B_t = B(\Clr_t)$.

    \item If a Red unanimity occurs, we say Red wins, and analogously for Blue.
\end{itemize}

\subsection{Main models and questions}  \label{sec:settings}  \label{sec:models}

The graph $G$ and the initial coloring $\Clr_0$ may be deterministically given or randomly generated from some joint distribution.
We use the following model:

\begin{itemize}
    \item $G$ is a random graph generated from the Erd\"{o}s-R\'{e}nyi $G(n, p)$ distribution: there are $n$ vertices, each pair of which is connected independently with probability $p$.
    The vast majority of works on majority dynamics use this graph model.

    \item $\Clr_0$ is chosen randomly and independently of $G$ by sampling $R_0$ uniformly among all subsets of $V$ with $n/2 + \adv$ elements.
    This is called the \textbf{fixed advantage coloring scheme} in \cite{TranVusparse}.
\end{itemize}

The numbers $n$, $p$ and $\adv$ are model parameters.
The independence between $G$ and $\Clr_0$ allows them to be generated in any order, simultaneously or not.
In fact, the discussion in \cite{TranVusparse} shows that the following models are equivalent (in terms of the joint distribution of $(G, \Clr_0)$)

\begin{enumerate}
    \item Fix $n$ vertices and their colors such that $|R_0| = n/2 + \adv$, then draw $G \sim G(n, p)$ over them.

    \item Fix $n$ vertices and their colors such that $|R_0| = |B_0| = n/2$, then draw $G \sim G(n, p)$ over them, then sample a set of ``defectors'' uniformly among subsets of $B_0$ of size $\adv$ and flip their colors. 
\end{enumerate}

Tran and Vu were among the first to consider the fixed advantage coloring scheme \cite{TranVuDense2020}.
They discovered the ``power of few'' phenomenon:
when $p$ is $\Theta(1)$, for any $\eps$, one can choose $\adv$ to be a sufficiently large constant to ensure Red wins with probability at least $1 - \eps$.
In other words, bribing a constant number of people is enough to win in a landslide, no matter how large the population is.
This constant is not large, as it is proven in \cite{TranVuDense2020} that for $p = 1/2$, $\adv = 6$ is enough for Red to win with probability $.99$.
Devlin and Berkowitz \cite{devlinberkowitz2022} later showed that $\adv = 3$ is enough for the winning odd to be at least $.6$.
Sah and Sawhney \cite{sahsawhney2021} later lowered this to $\adv = 1$, the smallest possible gap.

In real life, the average person knows only a tiny fraction of the population.
\emph{How many people need to be bribed for unanimous victory in this general case?}
Clearly, the density $p$ cannot be lower than the \emph{connectivity threshold} $n^{-1}\log n$, otherwise there will be, with high probability, small connected components with the same initial color.
These ``echo chambers'' will maintain their affiliation forever, no matter who the rest vote for.
Tran and Vu \cite{TranVusparse} formally posed the question

\begin{question}  \label{qn:optimal-power-of-few}
    Given $n$ and $p$ such that $(1 + \lambda)n^{-1}\log n \le p < 1$ for some constant $\lambda$, what is the smallest $\adv = \adv_{\min}(n, p, \eps)$ for which Red wins with probability at least $1 - \eps$?
\end{question}

It was shown in \cite{TranVusparse} that $\adv = \max\{ C_\eps p^{-1/2}, 10p^{-1} \}$ for a sufficient constant $C_\eps$ is enough, which interestingly does not depend directly on $n$.
The authors also conjectured \cite[Conjecture 1.11]{TranVusparse} that $\adv = C_\eps p^{-1/2}$ is the optimal ``power of few'' value.
This has been confirmed for the dense regime, where $p > \log^{-1/16}n$ in \cite{sahsawhney2021}.
The case for lower densities remains open.

Another popular coloring scheme, considered by most papers before \cite{TranVuDense2020}, is the \textbf{random 1/2 coloring scheme} \cite{TranVusparse}: $\{\Clr_0(v)\}_{v\in V}$ are i.i.d. random variables taking value in $\{\pm 1\}$ with equal probabilities.
(This is also equivalent to sampling $(R_0, B_0)$ uniformly among all possible partitions of $V$.)
Consider majority dynamics on $G\sim G(n, p)$ with $\Clr_0$ being the random 1/2 coloring.
The main question in this model is

\begin{question}  \label{qn:optimal-p-1/2-coloring}
    Given $n$ and $\eps$, what is the minimum $p = p_{\min}(n, \eps)$ for which the side with the initial majority wins with probability at least $1 - \eps$?
\end{question}

This question is related to Question \ref{qn:optimal-power-of-few} in the following way.
For any $\eps$, there is a constant $c_\eps$ such that an initial coloring from the random 1/2 scheme satisfies $|R_1| \ge (n + c_\eps \sqrt{n})/2$ with probability at least $1 - \eps/2$, by the central limit theorem.
Therefore, one can replace a random 1/2 coloring with a fixed coloring with $\adv = \Theta(\sqrt{n})$ to analyze any high-probability event.
Question \ref{qn:optimal-p-1/2-coloring} is then equivalent to:
\emph{What is the minimum $p$ such that $\adv_{\min}(n, p, \eps) = c_\eps\sqrt{n}$?}

If the optimal ``powef of few'' value is indeed $C_\eps p^{-1/2}$, it is tempting to deduce that the optimal $p_{\min}(n, \eps)$ is $(C_\eps/c_\eps)n^{-1}$.
However, the correct answer would be the connectivity threshold, more precisely $(1 + \lambda)n^{-1}\log n$ for any $\lambda > 0$, to avoid the ``echo chambers'' effect discussed above.

A very similar prediction was made in the pioneering paper by Benjamini, Chan, O'Donnell, Tamuz and Tan \cite{benjamini}.
They conjectured that $p = C_\eps n^{-1}$ is enough for \emph{$\eps$-unanimity:} the initially larger side having at least $(1 - \eps)n$ vertices in the final state.
It is likely that resolving Question \ref{qn:optimal-p-1/2-coloring} will also advance the progress towards Benjamini et al's conjecture.

\subsection{Current progresses and new results}

Fountoulakis, Kang and Makai \cite{fountoulakis} made the first progress towards resolving Question \ref{qn:optimal-p-1/2-coloring}.
They proved that the initial majority side wins in 4 days with probability at least $1 - \eps$ in the range $C_\eps n^{-1/2} \le p < 1$ for a sufficient constant $C_\eps$.
Interestingly, their proof first translates the context into the fixed advantage coloring scheme with $\adv = \Theta(\sqrt{n})$, without explicitly defining this scheme.

Tran and Vu independently defined the fixed advantage coloring scheme and discovered the ``power of few'' phenomenon for $p = \Theta(1)$ in \cite{TranVuDense2020}.
Their second paper \cite{TranVusparse} united progresses in both models, extending ``power of few'' for $(1 + \lambda)n^{-1}\log n \le p < 1$ (the best possible range) and reproducing the main result of \cite{fountoulakis} (with an extra day), and posed Question \ref{qn:optimal-power-of-few} explicitly.

In a recent breakthrough on Question \ref{qn:optimal-p-1/2-coloring}, Chakraborti, Kim, Lee and Tran \cite{jhk-etal2021} proved that the initial majority wins in 5 days with probability at least $1 - \eps$ if $p \ge C_\eps n^{-3/5}\log n$ for a constant $C_\eps$, and conjectured that their approach can be improved to work even for $p \ge C_\eps n^{-2/3}\log n$.

Our main result pushes the boundaries towards both questions and resolves the conjecture in \cite{jhk-etal2021}, even achieving a slightly better logarithmic factor.


\begin{theorem}  \label{thm:main-p<n^(-1/2)}
    Consider majority dynamics on a $G(n, p)$ random graph $G$ and a fixed initial coloring $\Clr_0$ with $|R_0| \ge n/2 + \adv$.
    For every $\eps$, there is a universal constant $C$ and constants $C_\eps$ and $N_\eps$ only depending on $\eps$ such that if $n \ge N_\eps$, $n^{-1/2}\log^{1/4}n \ge p \ge C n^{-1}\log^2 n$, and $\adv \ge C_\eps p^{-3/2}n^{-1/2}\log n$, Red wins within $O(\log_{pn}n)$ days with probability at least $1 - \eps$.
\end{theorem}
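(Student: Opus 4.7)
The plan is to show that the advantage $\adv_t \defeq (|R_t|-|B_t|)/2$ grows by a factor of order $\sqrt{pn}$ each day, until it reaches $\Omega(n)$; one further day then yields unanimity. Starting from $\adv_0 \geq C_\eps p^{-3/2}n^{-1/2}\log n$, this target is reached after $T = O(\log_{pn} n)$ iterations, which matches the claim. In order to couple consecutive days without the same edges being used twice, I would decompose $G \sim G(n,p)$ via sprinkling into independent $G(n,p')$ layers with $p' \asymp p/T$, and use a fresh layer on each day. Since $pn \gg \log^2 n$ and $T$ is at most polylogarithmic, $p'n$ still exceeds $\log n$, which suffices for the per-day estimates below.

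\paragraph{Per-day boost.}
I would work in the defector formulation of \cite{TranVusparse}: sample a balanced $2$-coloring independently of $G$, then flip a uniformly random $S \subseteq B_0$ with $|S|=\adv$ to Red. Conditional on $\Clr_t$, for each vertex $v$ the difference $X_R - X_B$ between the numbers of Red and Blue fresh-layer neighbors of $v$ is a sum of independent signed Bernoullis with mean $\Theta(p'\adv_t)$ and variance $\Theta(p'n)$. A Berry--Esseen expansion gives
\[
\Pr[v\in R_{t+1}\given \Clr_t] \;=\; \tfrac12 + \Theta\!\left(\adv_t\sqrt{p'/n}\right) + O\!\left((p'n)^{-1/2}\right),
\]
so summing over $v$ produces $\E{\adv_{t+1}\given \Clr_t} = \Theta(\adv_t\sqrt{p'n})$, provided the leading term dominates the cumulative Berry--Esseen error (i.e.\ $\adv_t \gg (p')^{-1}$). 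An edge-exposure martingale on the fresh layer, combined with Azuma--Hoeffding, concentrates $|R_{t+1}|$ around this conditional mean at cost $O(\sqrt{n\log n})$. Iterating, $\adv_t \gtrsim (p'n)^{t/2}\adv_0$ on a high-probability event, provided the Berry--Esseen and Azuma errors remain dominated by the main term throughout.

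\paragraph{Cleanup and main obstacle.}
Once $\adv_T \geq (1-\delta)n/2$, a Chernoff bound over vertices using a final fresh layer shows that every remaining Blue vertex has a strict Red majority among its fresh-layer neighbors with probability $1-o(1)$, producing unanimity on the next day. The delicate point is the balance at $t=0$: one needs $\adv_0 \sqrt{p'n}$ to simultaneously dominate the Azuma error $\sqrt{n\log n}$ and the summed Berry--Esseen error $\sqrt{n/p'}$. Both the window $n^{-1}\log^2 n \ll p \ll n^{-1/2}\log^{1/4}n$ and the exponent $3/2$ in the hypothesis $\adv \gg p^{-3/2}n^{-1/2}\log n$ are calibrated so that $\adv_0\sqrt{p'n} \gg p^{-1}\log n$, which swallows both error terms; the dominance then propagates through the iteration because $\adv_t$ is geometrically growing. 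Juggling these three error sources (per-vertex Berry--Esseen, across-vertex Azuma, and the sprinkling loss in the edge density) simultaneously over the full $O(\log_{pn} n)$-day trajectory is the main technical hurdle, and it is precisely where the improvement over \cite{TranVusparse} is extracted.
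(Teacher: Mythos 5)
Your sketch proposes an iterative ``one day at a time'' expansion with a sprinkling decomposition of $G$ into $T \asymp \log_{pn} n$ independent layers, using a fresh layer on each day. This is not how majority dynamics works, and the gap is fatal: on every day, each vertex polls \emph{all} of its neighbors in the fixed graph $G$, not a freshly-revealed independent subset. You cannot reserve edges for later days. If you split $G=G_1\cup\cdots\cup G_T$ and compute $\Clr_{t+1}$ using only $G_{t+1}$, you are analyzing an entirely different process, and nothing you prove about it transfers to majority dynamics on $G$. If instead you run the true dynamics but only argue about the fresh layer's contribution, the problem is worse: the ``old'' edges of $v$ are precisely those that produced $\Clr_t(v)$, so conditioning on $\Clr_t$ introduces strong dependence between $v$'s color and the colors of its already-exposed neighbors, and the fresh layer with density $p'\asymp p/T$ contributes a strictly smaller-order signal than the exposed bulk. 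The paper points to exactly this obstruction in Section 1.4: for $t\ge 2$ ``one does not have the independence between the Red and Blue neighborhoods of a vertex $v$,'' and it explicitly characterizes the proposed iteration $\adv_t \gtrsim \sqrt{pn}\,\adv_{t-1}$ as very hard to establish.

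What the paper actually does is quite different and deliberately stops the ``slow expansion'' at day~2. It works in the defector formulation, but instead of tracking $\adv_t$ day by day it introduces $D$-almost Red vertices (Definition \ref{defn:almost-red}): vertices $v$ for which the day-1 balanced-coloring margin $\sum_{u\in N(v)}\bClr_1(u)$ is at least $-D$. Lemma \ref{lem:almost-pos-v} shows there are at least $n/2 + c_B D\sqrt{n/p}$ such vertices, via a first--second moment computation with a Berry--Esseen argument and a covariance bound (this is where the improvement over \cite{jhk-etal2021} is extracted). Lemma \ref{lem:unst-neighs-with-swing-neighs} shows that after the $\adv$ defectors flip, every vertex sees at least $D = \Theta(\adv p^{3/2}n^{1/2})$ flipping neighbors, so every $D$-almost Red vertex becomes Red on day~2, giving $\adv_2 \gtrsim \adv\, pn \gtrsim \sqrt{n/p}$. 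This is enough to hand off to the landslide lemmas of \cite{TranVusparse,jhk-etal2021}, which handle days $3$ onward regardless of which vertices are Red; there is no need to push $\adv_t$ all the way to $\Omega(n)$ as your sketch attempts, and no iteration beyond day~2. Your arithmetic correctly identifies the $p^{-3/2}n^{-1/2}$ threshold as the two-day balance point, but the mechanism you describe for reaching it does not go through.
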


In other words, in the range $n^{-1/2}\log^{1/4}n \ge p \ge C n^{-1}\log^2 n$, the theorem above gives the answer $\adv = C_\eps p^{-3/2}n^{-1/2}\log n$ for Question \ref{qn:optimal-power-of-few}.
This is better than the answer $\max\{C'_\eps p^{-1/2}, 10p^{-1}\}$ in \cite{TranVusparse} for $p$ the above range, since
\begin{equation*}
    C_\eps p^{-3/2}n^{-1/2}\log n = \frac{C_\eps \log n}{p\sqrt{pn}} 
    \le \frac{C_\eps}{B_\eps^{1/2}p} \le \max\left\{\frac{10}{p}, \frac{C'_\eps}{p^{1/2}}\right\}
    ,\text{ given } C_\eps \text{ is large enough}.
\end{equation*}

Theorem \ref{thm:main-p<n^(-1/2)} also gives the answer $p = C_\eps n^{-2/3}\log^{2/3}n$ for Question \ref{qn:optimal-p-1/2-coloring}, improving on the answers $C_\eps n^{-3/5}\log n$ in \cite{jhk-etal2021} and $C_\eps n^{-1/2}$ in \cite{fountoulakis} and is the best known so far.
This is also the best (in terms of power of $n$) one can get from the approach in \cite{jhk-etal2021} (see the next section for details).

\begin{corollary}  \label{cor:p_log(n)n^(-2/3)}
Consider majority dynamics on a $G(n, p)$ random graph $G$ with a random $1/2$ initial coloring.
For each $\eps > 0$, there are is a constant $C_\eps > 0$ such that
if $p \ge C_\eps n^{-2/3}\log^{2/3} n$, then the initially larger side wins within $O(\log_{pn} n)$ days with probability at least $1 - \eps$.
\end{corollary}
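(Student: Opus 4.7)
The plan is to derive the corollary from \Cref{thm:main-p<n^(-1/2)} by reducing the random $1/2$ coloring scheme to the fixed advantage scheme via conditioning, exactly as outlined in the discussion following \Cref{qn:optimal-p-1/2-coloring}. Under the random $1/2$ scheme, $|R_0| \sim \Bin(n, 1/2)$, so the central limit theorem (or Berry--Esseen) yields a constant $c_\eps > 0$ with
\[
    \Pr\left[\, \big||R_0| - n/2\big| \ge c_\eps \sqrt{n} \,\right] \ge 1 - \eps/2.
\]
Call this event $\mathcal{A}$, and let $\mathcal{A}_R = \mathcal{A} \cap \{|R_0| \ge n/2 + c_\eps\sqrt{n}\}$; by the Red/Blue symmetry it suffices to show that Red wins in $O(\log_{pn}n)$ days with conditional probability $\ge 1 - \eps/2$ given $\mathcal{A}_R$.

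Conditional on $|R_0|$ being any fixed value at least $n/2 + c_\eps\sqrt{n}$, the set $R_0$ is uniform among subsets of $V$ of that size and is independent of $G$; this is precisely the fixed advantage scheme with $\adv \ge c_\eps\sqrt{n}$. I now split on $p$. When $C n^{-1}\log^2 n \le p \le n^{-1/2}\log^{1/4}n$, the hypothesis of \Cref{thm:main-p<n^(-1/2)} reduces to requiring $c_\eps\sqrt{n} \ge C_\eps p^{-3/2}n^{-1/2}\log n$, which is equivalent to
\[
    p \ge (C_\eps/c_\eps)^{2/3}\, n^{-2/3}\log^{2/3} n.
\]
Choosing the constant $C_\eps$ in the corollary at least $(C_\eps/c_\eps)^{2/3}$ (and absorbing the $N_\eps$ of the theorem), this is exactly the assumption of the corollary, and \Cref{thm:main-p<n^(-1/2)} delivers Red winning in $O(\log_{pn}n)$ days with conditional probability at least $1 - \eps/2$.

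For the remaining range $p > n^{-1/2}\log^{1/4}n$, I invoke the sparse result of Tran and Vu \cite{TranVusparse}: with $\adv \ge c_\eps\sqrt{n}$ we have $p\adv^2 \ge c_\eps^2 \sqrt{n}\log^{1/4}n \to \infty$, $p\adv \ge c_\eps \log^{1/4}n \ge 100$ for $n$ large, and $p \gg n^{-1}\log n$; thus Red achieves unanimity with probability $\ge 1 - \eps/2$, and in a constant number of days, which is $O(\log_{pn}n)$ since $\log(pn) = \Omega(\log n)$ in this range. Combining the two cases, the initially larger side wins with unconditional probability at least $(1-\eps/2)(1-\eps/2) \ge 1 - \eps$.

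There is essentially no obstacle beyond a clean conditioning reduction and a constant-chasing calculation matching $\sqrt{n}$ against $p^{-3/2}n^{-1/2}\log n$; all of the difficulty is concentrated in \Cref{thm:main-p<n^(-1/2)} itself, and the corollary is merely a translation between the two coloring schemes. The only subtle point is remembering to use the Tran--Vu sparse regime to cover $n^{-1/2}\log^{1/4}n < p < 1$, which is outside the stated range of \Cref{thm:main-p<n^(-1/2)} but trivially handled because $\adv = \Theta(\sqrt{n})$ comfortably exceeds the older threshold $\max\{C'_\eps p^{-1/2}, 10 p^{-1}\}$ throughout this region.
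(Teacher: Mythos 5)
Your proof is correct and follows essentially the same route as the paper's: condition on the size of the initial advantage, note that this puts you in the fixed-advantage model with $\adv \gtrsim \sqrt{n}$, apply \Cref{thm:main-p<n^(-1/2)}, and match $c_\eps\sqrt n$ against the threshold $C_\eps p^{-3/2}n^{-1/2}\log n$ to solve for $p$. The bookkeeping differs only cosmetically: you partition the failure probability as $\eps/2 + \eps/2$ via a Berry--Esseen step, whereas the paper uses an $\eps/3 + \eps/3 + \eps/3$ split and, rather than invoking the theorem's $\eps$-level guarantee, pulls the explicit failure bound $O\bigl(\log^2 n / (\adv^2 p^3 n)\bigr)$ out of the proof of \Cref{lem:day2} and sums it against $\Pr(\mathcal{E}_\adv)$. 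Both are valid; your version stays at the level of the theorem statement, which is cleaner.

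One substantive point in your favor: you noticed that \Cref{thm:main-p<n^(-1/2)} carries the upper bound $p \le n^{-1/2}\log^{1/4}n$, and since the corollary imposes no upper bound on $p$ you correctly peel off the range $p > n^{-1/2}\log^{1/4}n$ and dispose of it via the Tran--Vu sparse result (indeed there $p\adv^2 \to \infty$, $p\adv \ge 100$ eventually, and $p \gg n^{-1}\log n$, so their theorem applies directly with $\adv = \Theta(\sqrt n)$). The paper's proof applies \Cref{thm:main-p<n^(-1/2)} to all $\adv \ge c_\eps\sqrt n$ without ever verifying this upper bound on $p$; it is implicitly relying on the large-$p$ regime already being covered by earlier literature, but as written the argument has a small gap that your version closes. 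The only nit on your side is notational: writing ``choose $C_\eps \ge (C_\eps/c_\eps)^{2/3}$'' overloads the symbol $C_\eps$ for two distinct constants (the corollary's and the theorem's, the latter instantiated at level $\eps/2$); renaming one of them would make the constant-chasing unambiguous.
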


\begin{proof}
Let $\mathcal{U}_R$ respectively be the event Red wins.
Define $\mathcal{U}_B$ analogously and let $\mathcal{U} = \mathcal{U}_R \vee \mathcal{U}_B$.
For each $\adv$, let $\mathcal{E}_\adv$ be the event that $|R_0| = n/2 + \adv$.
The following fact is due to, e.g. \cite[Lemma 3.1]{jhk-etal2021} and \cite[Lemma 3.1]{fountoulakis}:
there is a constant $c_\eps > 0$ such that
\begin{equation}  \label{eq:p_log(n)n^(-2/3)-temp1}
    \sum_{|\adv| \ge c_\eps\sqrt{n}} \Pr(\mathcal{E}_\adv)
    = \Pr\left(\max\{|R_0|, |B_0|\} \ge \frac{n}{2} + c_\eps\sqrt{n}\right)
    \ge 1 - \frac{\eps}{3}.
\end{equation}
We also have
\begin{equation} \label{eq:p_log(n)n^(-2/3)-temp2}
\begin{aligned}
    \Pr(\mathcal{U})
    & \ge \sum_{\adv \ge c_\eps\sqrt{n}} \Pr(\mathcal{U}\mid \mathcal{E}_\adv)\Pr(\mathcal{E}_\adv)
    + \sum_{\adv \ge c_\eps\sqrt{n}} \Pr(\mathcal{U}\mid \mathcal{E}_{-\adv})\Pr(\mathcal{E}_{-\adv}) - \eps/3.
\end{aligned}
\end{equation}
Take the constants $C$ and $C_\eps$ from Theorem \ref{thm:main-p<n^(-1/2)}.
Suppose $p \ge A_\eps n^{-2/3}\log^{2/3}n$ for a constant $A_\eps$ to be chosen later.
Then clearly $p \gg n^{-1}\log^2 n$.
For each $\adv \ge c_\eps\sqrt{n}$, we have
$\adv p^{3/2}n^{1/2} \ge c_\eps A_\eps^{3/2}\log n$,
Choose $A_\eps$ so that $c_\eps A_\eps^{3/2} > C_\eps$,
we can then apply Theorem \ref{thm:main-p<n^(-1/2)} to get, for a constant $C_1$,
\begin{equation*}
	\Pr(\mathcal{U} \mid \mathcal{E}_\adv) \ge \Pr(\mathcal{U}_R \mid \mathcal{E}_\adv) \ge 1 - \frac{C_1\log^2 n}{\adv^2p^3n}
	\ge 1 - \frac{C_1}{c_\eps^2A_\eps^3}.
\end{equation*}
Now choose $A_\eps$ so that $c_\eps^2 A_\eps^3 > 3C_1/\eps$, the right-hand side is at least $1 - \eps/3$.
Then by symmetry, $\Pr(\mathcal{U} \mid \mathcal{E}_{-\adv}) \ge 1 - \eps/3$.
Therefore, Eq. \eqref{eq:p_log(n)n^(-2/3)-temp2} becomes
\begin{equation*}
\begin{aligned}
    \Pr(\mathcal{U})
	&
    \ge \sum_{|\adv| \ge c_\eps\sqrt{n}} \left(1 - \frac{\eps}{3}\right) \Pr(\mathcal{E}_\adv) - \frac{\eps}{3}
	= \left(1 - \frac{\eps}{3}\right) \sum_{|\adv| \ge c_\eps\sqrt{n}} \Pr(\mathcal{E}_\adv) - \frac{\eps}{3}
    \ge \left(1 - \frac{\eps}{3}\right)^2 - \frac{\eps}{3} \ge 1 - \eps,
\end{aligned}
\end{equation*}
where the second inequality is due to Eq. \eqref{eq:p_log(n)n^(-2/3)-temp1}.
The proof of Corollary \ref{cor:p_log(n)n^(-2/3)} is complete.
\end{proof}

Next, we comment on the theoretically optimal answers to Questions \ref{qn:optimal-power-of-few} and \ref{qn:optimal-p-1/2-coloring}.
We will show that the results by Tran and Vu \cite{TranVusparse} and Fountoulakis et al \cite{fountoulakis} are the best possible for any ``first-day'' approach,
while Theorem \ref{thm:main-p<n^(-1/2)} and Corollary \ref{cor:p_log(n)n^(-2/3)} are the best for any ``second-day'' approach.
The exact meanings of ``first-'' and ``second-day'' will be clarified in the next section.

\subsection{Theoretical limits for ``power of few''}  \label{sec:optimality}


For convenience, let $\adv_t \defeq \frac{1}{2}(|R_t| - |B_t|)$.
In their seminal paper \cite{benjamini}, Benjamini et al predicted that for $p \ge (1 + \lambda)n^{-1}\log n$, the dynamics progresses in two phases (assuming $\adv_0$ is large enough):
\begin{enumerate}
    \item \emph{Slow expansion:} The Red advantage grows by a $\Theta(\sqrt{pn})$ factor every day, until it is more than $2\eps n$, for some small constant $\eps$ (meaning Red has more than $\left(\frac{1}{2} + \eps\right)n$ members).

    \item \emph{Landslide:} Once the threshold $\left(\frac{1}{2} + \eps\right)n$ is passed, the Red camp rapidly converts everyone.
    Blue will shrink by a $\Theta(pn)$ factor every day, leading to a Red win after $O(\log_{pn}n)$ days.
\end{enumerate}
All papers in majority dynamics that we know of so far follow this roadmap.

The landslide phase has been proven to happen regardless of the members in each side, independently in \cite{fountoulakis} and \cite{TranVusparse}, the latter for the whole range $p \ge (1 + \lambda)n^{-1}\log n$.

In fact, it is shown in \cite{TranVusparse} that once $\adv_k \ge C\sqrt{\frac{n}{p}}$ for some sufficient constant $C$ (the day before the landslide), the graph is so well-connected that Red wins no matter who its members are.

The slow expansion phase is tricky because the advantage has not reached a point where one does not need to care about the members of each camp.

Most arguments so far deal with this phase as a whole, meaning they choose some $k$ and prove directly that $\adv_k \ge  C\sqrt{\frac{n}{p}}$, rather than showing $\adv_t \ge c\sqrt{pn}\adv_{t - 1}$ for all $1\le t\le k$.
The latter is very hard to prove since for $t \ge 2$, one does not have the independence between the Red and Blue neighborhoods of a vertex $v$ to analyze $\Clr_t(v)$ anymore.

The approaches by Tran and Vu \cite{TranVuDense2020,TranVusparse}, Fountoulakis et al \cite{fountoulakis}, and Devlin and Berkowitz \cite{devlinberkowitz2022} and Sah and Sawhney \cite{sahsawhney2021} are \textbf{first-day approaches}, since they avoid this loss of independence by proving $\adv_1 \ge C\sqrt{\frac{n}{p}}$ under the right conditions.
One needs $\adv_0 = \adv$ to be large enough for this to hold, and since $\adv_1 = \Theta(\adv_0\sqrt{pn})$, one needs $\adv = \Omega(Cp^{-1})$ for this aproach to work.

The approach by Chakraborti et al \cite{jhk-etal2021} and this paper is a \textbf{second-day approach}.
We do not make any statement about $\adv_1$ and prove directly that $\adv_2 \ge C\sqrt{\frac{n}{p}}$.
Since $\adv_2 = \Theta(\adv_0pn)$, the best one can get out of this approach is $\adv = \Omega(p^{-3/2}n^{-1/2})$.
This means Theorem \ref{thm:main-p<n^(-1/2)} is optimal for any second-day approach, except for the log factor.

\emph{What does an optimal value for $\adv$ look like?}
To kick off the first phase, we need $\adv_0$ to not be too small.
The analysis in \cite{TranVusparse} shows that $\adv_1$ (as a function of the random graph $G$) has expectation $\Theta(\adv\sqrt{pn})$ and standard deviation $\Theta(\sqrt{n})$.
Informally, we need $\E{\adv_1} \ge C_\eps\sqrt{\Var{\adv_1}}$ for a sufficiently large $C_\eps$ to assert that $\adv_1 = \Omega(\E{\adv_1})$ with probability at least $1 - \eps$.
This implies $\adv \ge C_\eps p^{-1/2}$ is the lowest possible advantage.
This has been reached by Sah and Sawhney \cite{sahsawhney2021} in their breakthrough paper for the dense regime.
To extend it to the lower densities, one likely needs to find a general \textbf{k-day approach} to deal with the slow expansion phase.

When translating to the model with the random 1/2 coloring scheme to find $p_{\min}(n, \eps)$, it is not hard to see that the results of Tran and Vu \cite{TranVusparse} and Fountoulakis et al \cite{fountoulakis}, namely $p = C_\eps n^{-1/2}$ are optimal for any first-day approach;
while Corollary \ref{cor:p_log(n)n^(-2/3)}, which gives $p = C_\eps n^{-2/3}\log^{2/3}n$, is optimal up to the log factor for any second-day approach.

The rest of the paper is organized as follows:
In section \ref{sec:main-proof}, we prove Theorem \ref{thm:main-p<n^(-1/2)} after introducing two technical lemmas.
These lemmas will be proven in section \ref{sec:technical-lemmas}.
Section \ref{sec:binom-bound} is reserved for utility estimates on binomial random variables used throughout the proofs.

\section{Proof of main results}  \label{sec:main-proof}

\subsection{Notation and proof sketch}

For the proof, we will add many new notations.
The ones currently in use are also reminded here.
\begin{itemize}
    \item For $u, v\in V$, let $u\adj v$ denote $u$ and $v$ being adjacent.
    
    
    \item For each $v\in V$, let $N(v)$ be the set of $v$'s neighbors and $d(v) \defeq |N(v)|$ be its degree.
    For each subset $S\subset V$, let $d_S(v) \defeq |N(v)\cap S|$.
    
    
    \item Let $a_1 \vee a_2 \vee \ldots \vee a_n \defeq \max\{a_i\}_{i=1}^n$ and $a_1 \wedge a_2 \wedge \ldots \wedge a_n \defeq \min\{a_i\}_{i=1}^n$.
    
    \item When three terms $X, Y, E$ satisfy $|X - Y|\le E$, we write $X = Y \pm E$.
    While this notation does not obey standard arithmetic rules, we only need the following trivial properties:
    \begin{enumerate}
        \item For all $X, Y$ and all $E \ge 0$, $X = Y \pm E \iff X - Y = \pm E \iff Y = X \pm E$.
        \item For all $X, Y, Z$ and all $E_1, E_2\ge 0$, $X = Y \pm E_1 \wedge Y = Z \pm E_2 \implies X = Z \pm (E_1 + E_2)$.
    \end{enumerate}
    When $X$, $Y$ and $E$ are functions of $n$, and $|X - Y| = O(E)$, we write $X = Y \pm O(E)$.
    
    \item Let $\Phi$ be the standard Gaussian CDF, i.e. $\Phi(x) \defeq (2\pi)^{-1/2}\int_{-\infty}^x e^{-t^2/2}dt$.
    
    \item When $x(n), y(n)$ are positive functions and $\liminf_{n\to \infty} \frac{x(n)}{y(n)} = C$ for a constant $C > 0$, we write $x(n) \gtrsim y(n)$.
    If $\limsup_{n\to \infty} \frac{x(n)}{y(n)} = 0$, we write $x(n) \ll y(n)$.

    \item For a coloring $(R, B)$ on $V$, the corresponding coloring map is $\Clr = \Clr(R, B): V\to \{\pm 1\}$ such that $\Clr(v) = 1$ for $v\in R$ and $-1$ for $v\in B$.
    Conversely, we write $R = R(\Clr)$ and $B = B(\Clr)$.

    \item For subsequent days, we let $R_t = R(\Clr_t)$ and $B_t = B(\Clr_t)$ and in turn $\Clr_t = \Clr(R_t, B_t)$.
\end{itemize}

The proof of Theorem \ref{thm:main-p<n^(-1/2)} follows closely the structure of the argument in \cite{jhk-etal2021}.
Our main innovations include repeating the computations in a more general setting by replacing the initial gap of $c\sqrt{n}$ in their paper with the parameter $\adv$, and improving \cite[Lemma 1.3]{jhk-etal2021} by handling some expressions more delicately with precise cancellations.


For the sake of completeness, we will present the entirety of the new proof, except where results from \cite{jhk-etal2021} and other works are used explicitly.
The proof follows two steps, as discussed in Section \ref{sec:optimality}

\begin{itemize}
    \item \emph{Slow expansion.} We show that with high probability $|R_2| \ge \frac{n}{2} + C\adv pn$ for a constant $C$.

    \item \emph{Landslide.} We use results from \cite{TranVusparse,fountoulakis} to show that Red wins within $O(\log_{pn}n)$ days from the third day.
\end{itemize}

Per the discussion in Section \ref{sec:optimality}, the bulk of the technical challenge comes from the first step.
The next section will outline the intuition behind our approach, including a crucial perspective change that makes the second-day approach work.

\subsection{First Two Days}

Below is the main result of the first two days:
\begin{lemma}  \label{lem:day2}
    Consider a majority dynamics process on a random graph $G\sim G(n, p)$ with $p \le cn^{-1/2}$ for an arbitrary constant $c$, and an initial coloring $\Clr_0$ with $\lceil n/2 + \adv \rceil$ Red vertices.
    There are universal constants $C$ (large enough) and $c > 0$ (small enough) so that the following holds:
    For each $\eps > 0$, there is a large enough constant $C_\eps$ such that if
    \begin{equation*}
        pn \ge C\log^2 n
        \quad \text{ and } \
        \adv p^{3/2}n^{1/2} \ge C_\eps\log n,
    \end{equation*}
    then with probability $1 - \eps$, we have $\sum_{v\in V} \Clr_2(v) \ge 2c \adv pn$, meaning $|R(\Clr_2)| \ge n/2 + c\adv pn$.
\end{lemma}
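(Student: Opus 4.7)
The plan is to analyze the Red advantage after two days via a first-moment/variance argument and then apply Chebyshev. Let $S \defeq \sum_{v \in V} \Clr_2(v) = 2|R_2| - n$. I will show that $\E{S} \gtrsim \adv pn$ with a constant strictly larger than $2c$, and that $S$ is concentrated around its mean, so that $S \ge 2c \adv pn$ with probability at least $1 - \eps$.

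For the first moment, fix a vertex $v$ and expose first the edges incident to $v$. Conditional on the typical behavior $|N(v)| \approx pn$, the day-$2$ color of $v$ is determined by $T_v \defeq \sum_{u \in N(v)} \Clr_1(u)$, with $\Clr_1(v)$ handling ties. For a single $u$, the count $d_{R_0}(u)$ is hypergeometric with mean about $d(u)(\tfrac{1}{2} + \adv/n)$ and variance about $d(u)/4$, so a local CLT estimate yields $\E{\Clr_1(u)} \approx c_1 \adv \sqrt{p/n}$ when $d(u) \approx pn$. Hence the conditional mean of $T_v$ is roughly $c_1 |N(v)| \adv \sqrt{p/n} \approx c_1 \adv p^{3/2} n^{1/2}$, while its conditional variance is $O(pn)$. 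A second Gaussian approximation then gives $\Pr(T_v > 0) - \tfrac{1}{2} \gtrsim c_2 \adv p$, so $\E{\Clr_2(v)} \gtrsim c_2 \adv p$ and $\E{S} \gtrsim c_2 \adv pn$. The hypothesis $\adv p^{3/2} n^{1/2} \ge C_\eps \log n$ is precisely what makes the signal $\E{T_v}$ dominate both the Berry-Esseen error $O(1/\sqrt{pn})$ and the atypical-degree contributions, uniformly over $v$ after a union bound.

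The main obstacle is that $\{\Clr_1(u)\}_{u \in N(v)}$ are not independent: they share the initial coloring on overlapping neighborhoods and share edges inside $N(v)$. I would control this via a two-stage exposure. After revealing the edges incident to $v$, reveal for each $u \in N(v)$ the edges from $u$ into $V \setminus (\{v\} \cup N(v))$; these ``outside'' edges carry essentially all the randomness in $\Clr_1(u)$ and are independent across different $u$. The remaining dependencies --- edges inside $N(v)$ and the initial coloring restricted to $N(v)$ --- contribute lower-order terms since $p$ is small and $|N(v)| \approx pn$. The ``precise cancellations'' alluded to in the introduction likely arise here: a direct Gaussian estimate for $\Pr(\Clr_1(u) = 1)$ produces terms of order $1$ that must cancel by symmetry to isolate the linear-in-$\adv$ contribution, which requires writing the hypergeometric tail in the form $\Phi(\mu/\sigma)$ with the conditional mean plugged in carefully, then Taylor-expanding around the symmetric point.

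For concentration, I would bound $\Var{S} = \sum_{v, v'} \Cov{\Clr_2(v), \Clr_2(v')}$. The diagonal terms contribute $O(n)$. For $v \neq v'$, the random variables determining $\Clr_2(v)$ and $\Clr_2(v')$ depend on largely disjoint edge sets and initial colors (the typical overlap of their $2$-neighborhoods is small compared to $pn$), so each covariance is much smaller than $(\adv p)^2$; summed over pairs this yields $\Var{S} = o((\adv pn)^2)$ under the hypothesis. Chebyshev's inequality then produces $S \ge 2c \adv pn$ with probability at least $1 - \eps$, provided $C_\eps$ is chosen large enough and $c$ small enough.
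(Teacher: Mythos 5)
Your plan is a direct first-moment/variance argument on $S = \sum_v \Clr_2(v)$ in the $\adv$-gap process, whereas the paper follows the balanced-coloring-plus-defectors decomposition of Chakraborti, Kim, Lee and Tran: analyze the balanced process $\bClr$ to count ``$D$-almost Red'' vertices (Lemma~\ref{lem:almost-pos-v}), then show separately that a random set of $\adv$ defectors pushes every vertex's day-$1$ neighborhood count up by $\ge D$ (Lemma~\ref{lem:unst-neighs-with-swing-neighs}), so that all $D$-almost Red vertices flip on day 2. These are genuinely different routes, and I want to flag a concrete gap in yours.

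Your first-moment step asserts $\Pr(T_v > 0) - \tfrac12 \gtrsim c_2 \adv p$ and hence $\E{\Clr_2(v)} \gtrsim c_2 \adv p$ for each individual $v$. This is false, because your computation of $\E{T_v}$ ignores the self-feedback of $v$'s own color. Each $u \in N(v)$ sees $v$ as a neighbor of known color $\Clr_0(v)$, so conditioning on $u \in N(v)$ shifts $\Pr(\Clr_1(u) = 1)$ by $\Theta(\Clr_0(v)/\sqrt{pn})$; summing over the $\approx pn$ neighbors gives a $\Theta(\Clr_0(v)\sqrt{pn})$ term in $\E{T_v}$. In the operative regime for the second-day argument one has $p\adv = O(1)$, so $\adv p^{3/2}n^{1/2} = (p\adv)\sqrt{pn} = O(\sqrt{pn})$ and the self-feedback term is of the same or larger order than your $\adv$-signal. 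After normalizing by $\sqrt{\Var{T_v}} \approx \sqrt{pn}$, the argument of $\Phi$ is $\Theta(1)\Clr_0(v) + O(\adv p)$, so $\E{\Clr_2(v)} = \Theta(\Clr_0(v)) + O(\adv p)$ --- it is $\Theta(1)$ and \emph{negative} for initially Blue $v$. The bound $\E{S} \gtrsim \adv pn$ is still correct, but it emerges only after pairing Red and Blue vertices and letting the $\Theta(\Clr_0(v))$ terms cancel; the residue of order $\adv p$ per pair is what survives. This is exactly the ``precise cancellation'' the paper performs in Claim~\ref{claim:almost-pos-set-exp} (via the term $2\bClr_0(v)\Pr(Z^+ = Z^-)\sqrt{pn}$ in Claim~\ref{claim:P(A_v)} and the subsequent pairing using the convexity estimate from \cite{jhk-etal2021}). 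Your sketch, by omitting this term, implicitly asserts a pointwise inequality that fails for half the vertices.

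The covariance step is also too coarse as stated. Showing $\Cov{\Clr_2(v), \Clr_2(v')} = O(\log^2 n / (pn))$ --- which is what is needed to push Chebyshev through with the assumed lower bound on $\adv p^{3/2}n^{1/2}$ --- requires conditioning on the day-$1$ colors of the shared neighborhood $N(v)\cap N(v')$ and then proving a uniform Lipschitz-type estimate on the conditional success probability as that conditioning varies (this is the content of Claim~\ref{claim:Cov(A_v1, A_v2)} and is flagged as an innovation over \cite{jhk-etal2021}). The observation that the $2$-neighborhoods have small overlap is the right intuition but does not by itself yield the quantitative covariance bound. Your route is not obviously impossible, but it would require redoing both these careful estimates directly in the $\adv$-gap process, giving up the clean separation of randomness (graph vs.\ defectors) that the paper's coupling exploits.
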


The approach in \cite{jhk-etal2021} is to first consider a \emph{balanced coloring} $\bClr_0$, where each side has $n/2$ vertices, then draw the graph, then uniformly sample a set $S$ of $\adv$ ``defectors'' from $B(\bClr_0)$ and flip them to form $\Clr_0$, meaning $R(\Clr_0) = R(\bClr_0) \cup S$.

If one compares $\bClr_1$ and $\Clr_1$, some vertices in will have their day 1 color switch from Blue to Red.
If enough such switching happens in $N(v)$, the odds of $v$ turning Red increases significantly.

This motivates the following definition:
\begin{definition}  \label{defn:almost-red}
    Given $D > 0$, a vertex $v$ is $D$-almost Red if $\sum_{u\in N(v)}\bClr_1(u) \ge -D$.
\end{definition}
This generalizes the notion of \emph{almost positive} vertices in \cite{jhk-etal2021}.

In the proof, we will choose $D = \Theta(\adv p^{3/2}n^{1/2})$ with an appropriately chosen linear factor.
This ensures that with high probability, after $\adv$ random Blue vertices defect to form $\Clr_0$, each of the $D$-almost Red vertices has more than $D$ flipping neighbors, making them Red in day 2.

We formalize this idea in the following two lemmas.
The first states that from a balanced initial coloring, there will be $\frac{n}{2} + C\sqrt{\frac{n}{p}}$ $D$-almost Red vertices, waiting to flip on day 2 after the defection.

\begin{lemma}[Stronger Lemma 1.3 in \cite{jhk-etal2021}] \label{lem:almost-pos-v}
Consider majority dynamics on a random graph $G\sim G(n, p)$ with $p \le n^{-1/2}\log^{1/4}n$ and an initial coloring $\bClr_0$ with $\lceil n/2 \rceil$ Red vertices.
There are universal constants $C_1$, $C_2$ such that the following holds:

If $pn \ge C_1\log^2 n$, then for every constant $B > 0$ and $D$ such that $B\sqrt{pn} \ge D \ge C_2\log n$, there is another constant $c_B > 0$ depending only on $B$ such that the number of $D$-almost Red vertices in $G$ is at least $\frac{n}{2} +  c_BD\sqrt{\frac{n}{p}}$ with probability at least $1 - O\left(\frac{\log^2 n}{c_B^2D^2}\right)$.
\end{lemma}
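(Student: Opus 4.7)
The target probability bound $1 - O(\log^2 n / (c_B^2 D^2))$ points at a Chebyshev argument, so my plan is a second-moment analysis of the count $X \defeq \#\{v : S_v \ge -D\}$, where $S_v \defeq \sum_{u \in N(v)} \bClr_1(u)$. I will (i) pin down $\E{X}$ via an antithetic symmetry, (ii) upper-bound $\Var{X}$ via careful conditioning, and then (iii) apply Chebyshev's inequality at deviation $c_B D\sqrt{n/p}$.

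For step (i), observe that swapping Red and Blue in $\bClr_0$ is a measure-preserving involution on the joint law of $(G, \bClr_0)$ (exactly so when $n$ is even; off by one vertex otherwise) under which $S_v \mapsto -S_v$. Hence $X$ is equidistributed with $X' \defeq \#\{v : S_v \le D\}$, and since $\{S_v \ge -D\} \cup \{S_v \le D\} = V$ with intersection $\{|S_v| \le D\}$, we have $X + X' = n + N_D^{\mathrm{in}}$ pointwise, where $N_D^{\mathrm{in}} \defeq \#\{v : |S_v| \le D\}$. Taking expectations gives $\E{X} = n/2 + \E{N_D^{\mathrm{in}}}/2$, so it suffices to establish a per-vertex anti-concentration $\Pr(|S_v| \le D) \gtrsim D/\sqrt{pn}$ and sum. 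Conditioning on the edges of $G$ not incident to $v$, the indicators $\{\one[u\adj v]\}_{u\ne v}$ are i.i.d.\ $\Bin(1,p)$, and for each $u$ outside a typically $O(\sqrt{n/p})$-sized ``boundary'' set, the value $\bClr_1(u)$ is determined independently of $\one[u\adj v]$. The resulting conditional signed sum has variance $\Theta(pn)$, and a Berry--Esseen estimate produces the desired anti-concentration in the range $C_2\log n \le D \le B\sqrt{pn}$.

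For step (ii), expand $\Var{X} = \sum_{v,w}\Cov{\one[S_v \ge -D], \one[S_w \ge -D]}$ (with the diagonal contributing $O(n)$) and, for the off-diagonal pairs, condition on the edges not incident to $\{v, w\}$. Given that conditioning, $(S_v, S_w)$ is a sum over $u \notin \{v, w\}$ of a cell depending only on the independent pair $(\one[u\adj v], \one[u\adj w])$ plus $O(1)$ residual terms from the edge $(v,w)$. A bivariate Berry--Esseen gives approximate joint Gaussianity with correlation $\rho_{vw} = \Cov{S_v, S_w}/(pn)$. The dominant contribution to $\Cov{S_v, S_w}$ comes from the diagonal $u = u'$ terms in $\E{S_v S_w}$, yielding $\Cov{S_v, S_w} = O(np^2)$ and $\rho_{vw} = O(p)$; the indicator covariance is then $O(\rho_{vw})$ up to a Gaussian-density prefactor, and summing over $\binom{n}{2}$ pairs gives $O(n^2 p)$, well within the required $O(n\log^2 n/p)$ in the regime $p \le n^{-1/2}\log^{1/4} n$.

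The main obstacle is in step (ii): extracting an $O(\rho_{vw})$ off-diagonal covariance requires careful bookkeeping of the ``boundary-vertex'' corrections (for those $u$ whose $\bClr_1$-value flips with either $(u,v)$ or $(u,w)$) together with the contribution of the edge $(v,w)$ itself. A term-by-term pairwise bound on these corrections loses factors that are recovered only by observing their precise cancellation against the corresponding pieces of $\E{S_v}\E{S_w}$; this is the ``precise cancellation'' alluded to in the introduction, and is where the improvement over the analogous argument in \cite{jhk-etal2021} appears. Once steps (i) and (ii) are in place, Chebyshev's inequality with deviation $c_B D\sqrt{n/p}$ (choosing $c_B$ small relative to the lower-bound constant in step (i)) yields failure probability $\Var{X}\cdot p / (c_B^2 D^2 n) = O(\log^2 n / (c_B^2 D^2))$, as required.
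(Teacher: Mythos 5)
Your step (i) is an elegant reformulation.  The paper instead lower-bounds $\Pr(A_v)$ directly via a Berry--Esseen normal approximation conditioned on a ``regular'' neighbourhood $(U,H)$ of $v$, obtaining $\Pr(A_v)\ge\Phi\bigl(\tfrac{D}{2\sqrt{pn}}+2\bClr_0(v)\Pr(Z^+=Z^-)\sqrt{pn}\bigr)\pm O((pn)^{-1/2})$, and then pairs vertices of opposite initial colour so that the $\bClr_0(v)$-dependent drift cancels and the $D/(2\sqrt{pn})$ signal accumulates to $\Omega(D\sqrt{n/p})$.  Your antithetic identity $\E{|A|}=n/2+\tfrac12\E{N^{\mathrm{in}}_D}$ achieves the same thing once the parity defect (one unpaired vertex when $n$ is odd) is absorbed, and it still requires essentially the same per-vertex Berry--Esseen anti-concentration $\Pr(|S_v|\le D)\gtrsim D/\sqrt{pn}$; so it is a genuine alternative route, not a shortcut around the CLT machinery.

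The gap is in step (ii).  You propose a bivariate Berry--Esseen reduction to correlated Gaussians with $\rho_{vw}=\Cov{S_v,S_w}/(pn)=O(p)$, and claim the indicator covariance is $O(\rho_{vw})$.  Even granting $\Cov{S_v,S_w}=O(np^2)$, the bivariate Berry--Esseen error is itself of order $(pn)^{-1/2}$: each of $S_v,S_w$ has variance $\Theta(pn)$ and bounded summands, so the approximation of $\Pr(A_{v_1}\cap A_{v_2})$ and of $\Pr(A_{v_i})$ by Gaussian probabilities carries an error $\Theta((pn)^{-1/2})$.  Since $p\le n^{-1/2}\log^{1/4}n$ forces $p\ll(pn)^{-1/2}$, the error term \emph{dominates} the Gaussian correlation term, so your per-pair covariance bound is really $O((pn)^{-1/2})$, not $O(p)$.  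Summed over $\binom{n}{2}$ pairs this gives $\Var{|A|}=O(n^{3/2}p^{-1/2})$, and Chebyshev at deviation $c_BD\sqrt{n/p}$ yields failure probability $O(\sqrt{pn}/(c_B^2D^2))$, which is far larger than the target $O(\log^2 n/(c_B^2D^2))$ unless $pn=O(\log^4 n)$ — not true in the stated range.  Your last paragraph acknowledges that a ``precise cancellation'' is needed but does not supply the mechanism.  The paper circumvents this entirely by a different decomposition: it conditions on the full joint neighbourhood structure $(U_1,U_2,U_3,H)$ and then on the day-1 colours $\mathcal{J}$ of the common neighbours $U_3$; conditionally on $\mathcal{J}$ the events $A_{v_1}$ and $A_{v_2}$ are independent, so the conditional covariance factors as $(\E{\one_{A_{v_1}}\mid\mathcal{J}}-h_1)(\E{\one_{A_{v_2}}\mid\mathcal{J}}-h_2)$, and each factor is shown to be $O(\log n/\sqrt{pn})$ by comparing across different $\mathcal{J}$ and different regular tuples.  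The product gives the per-pair bound $O(\log^2 n/(pn))$ with no bivariate normal approximation anywhere, which is exactly what the Chebyshev step needs.  You would need to replace your bivariate BE step with an argument of this conditional-independence type (or supply the cancellation you allude to) before the proof closes.
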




The second lemma states that after the initial defection, every vertex $v$ will have many neighbors that defect after day 1.
If $v$ also happens to be $D$-almost Red, for a small enough $D$, $v$ will defect after day 2.

\begin{lemma}  \label{lem:unst-neighs-with-swing-neighs}
    Consider majority dynamics on $G\sim G(n, p)$ with an initial coloring $\bClr_0$ with $\lceil n/2 \rceil$ Red vertices.
    Suppose that $\adv p^{3/2}n^{1/2} \ge 16\log n$ and $p\adv \le B$ for some constant $B > 0$. 
    Suppose a set $S$ of defectors is sampled uniformly among subsets of $B(\bClr_0)$ with size $\adv$ to form the coloring $\Clr_0$.
    Then there is a sufficiently small constant $c_B > 0$ such that, with probability $1 - O(n^{-1})$, every $v\in V$ has at least $c_B \adv p^{3/2}n^{1/2}$ neighbors $u$ such that $\bClr_1(u) = -1$ but $\Clr_1(u) = 1$.
\end{lemma}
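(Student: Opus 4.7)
The plan is to fix a vertex $v \in V$, establish the conclusion for $v$ with probability $1 - O(n^{-2})$, and union bound over the $n$ choices. Write $\sigma_u \defeq \sum_{w \in N(u)} \bClr_0(w)$; ignoring ties (handled routinely), a neighbor $u$ of $v$ belongs to $T_v$ iff $\sigma_u \in (-2|N(u)\cap S|,\,0)$. Heuristically $\sigma_u$ is approximately $\mathcal{N}(0,pn)$ and $|N(u)\cap S|$ concentrates near $\adv p$, so the window has length $\Theta(\adv p)$ and captures $\sigma_u$ with probability $\Theta(\adv \sqrt{p/n})$; summed over $d(v) \approx pn$ neighbors, $\E{|T_v|}$ is of order $\adv p^{3/2}n^{1/2}$, matching the target.

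Step 1 (graph side). Fix a small constant $\alpha$, set $D^* \defeq \alpha \adv p$, and let $U_v \defeq \{u \in N(v): \sigma_u \in (-2D^*,-D^*]\}$. I will show $|U_v| \gtrsim \adv p^{3/2}n^{1/2}$ with probability $1-O(n^{-2})$ by first exposing the edges at $v$ (so $N(v)$ is fixed with $d(v) = \Theta(pn)$ w.h.p.), and then exposing, for each $u \in N(v)$, the edges from $u$ into $V \setminus (N(v)\cup\{v\})$. The resulting partial sums $\sigma_u^{\mathrm{out}}$ are independent across $u \in N(v)$ and each is approximately $\mathcal{N}(0,pn)$, hence fall into $(-2D^*,-D^*]$ with probability $\Theta(\adv\sqrt{p/n})$ by Berry--Esseen; a Chernoff bound on the independent indicators gives that the count of such $u$ is $\gtrsim \adv p^{3/2}n^{1/2}$ with failure probability $\exp(-\Omega(\adv p^{3/2}n^{1/2})) \le n^{-2}$, using $\adv p^{3/2}n^{1/2} \ge 16\log n$. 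The residual $\sigma_u - \sigma_u^{\mathrm{out}}$ coming from edges inside $N(v)\cup\{v\}$ is of order $O(p^2 n) + O(\log n)$ in magnitude w.h.p.\ and is absorbed by a mild window enlargement.

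Step 2 (defector side). Condition on $G$ satisfying Step 1 and on typical codegrees $|N(u)\cap N(u')| = O(p^2 n)$ for all pairs (both hold w.h.p.). Given $G$, $|N(u)\cap S|$ is hypergeometric with mean $\approx \adv p = D^*/\alpha$, so $\Pr(|N(u)\cap S| > D^* \mid G) \ge 1/2$ for small enough $\alpha$; when this occurs together with $u \in U_v$, the shift $2|N(u)\cap S| > 2D^*$ exceeds $|\sigma_u|$, forcing $u \in T_v$. This yields $\E{|T_v| \mid G} \gtrsim \adv p^{3/2}n^{1/2}$, and a concentration argument over the random $S$ (addressed below) promotes this to a high-probability lower bound on $|T_v|$ itself.

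The main obstacle is upgrading this conditional expectation to a $1 - O(n^{-2})$ tail, since the indicators $\one[u \in T_v]$ for $u \in U_v$ are positively correlated through the common set $S$. The fix is to exploit $|N(u)\cap N(u')| = O(p^2 n)$ together with the assumption $p\adv \le B$: one has $\E{|N(u)\cap N(u')\cap S|} = O(p^2 \adv) = O(pB) = o(1)$, so distinct pairs are nearly independent and the pairwise covariances sum to $o(\E{\sum_{u\in U_v}\one[u\in T_v]}^2)$. A Bernstein-type argument, either via a Bernoulli coupling of $S$ (include each $w\in B(\bClr_0)$ independently with probability $2\adv/n$ and condition on $|S|=\adv$) or via a typical-case bounded-difference inequality, then yields the required Chernoff-type tail, and a union bound over the $n$ choices of $v$ completes the proof.
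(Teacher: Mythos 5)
Your decomposition breaks down in the regime $p\adv \ll 1$, which the lemma permits (the hypotheses only force $\adv p \le B$ and $\adv p^{3/2}n^{1/2} \ge 16\log n$; since $\adv p = \adv p^{3/2}n^{1/2}/\sqrt{pn}$, taking $pn$ large makes $\adv p$ arbitrarily small). In Step~1 you set $D^* = \alpha\adv p$ and work with the window $\sigma_u \in (-2D^*, -D^*]$. When $\adv p$ is small and $\alpha$ is a small constant, $D^* < 1/2$, so this interval contains no integers; since $\sigma_u$ is integer-valued, $U_v$ is then empty and the claimed bound $|U_v|\gtrsim \adv p^{3/2}n^{1/2}$ fails. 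The complementary problem appears in Step~2: $|N(u)\cap S|$ has mean $\approx \adv p$, and when $\adv p < 1$ its median is $0$, so $\Pr(|N(u)\cap S| > D^*) \le \Pr(|N(u)\cap S| \ge 1) = \Theta(\adv p) \ll 1/2$ — the assertion that this probability exceeds $1/2$ ``for small enough $\alpha$'' is false. The heuristic ``window of length $\Theta(\adv p)$'' is thus an illusion once $\adv p < 1$: the relevant factor of $\adv p$ must come from $\Pr(d_S(u)\ge 1)$, not from a window of continuous length $\Theta(\adv p)$.

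The paper sidesteps this entirely by working at the integer level from the start: it fixes an arbitrary $S$ (invoking symmetry over the choices of $S$, so there is no hypergeometric stage and no cross-correlation through $S$ to control), and defines $u$ to be \emph{vulnerable} when $d_{\bR_0}(u) = d_{\bB_0\setminus S}(u)$ and $d_S(u) \ge 1$. These two events depend on disjoint edge sets (edges of $u$ into $\bR_0\cup(\bB_0\setminus S)$ vs.\ into $S$) and hence are independent; the first has probability $\Theta((pn)^{-1/2})$ by a local CLT estimate and the second has probability $\Theta(p\adv)$ using $p\adv \le B$, giving the correct $\Theta(\adv p/\sqrt{pn})$ per neighbor in every regime of $\adv p$. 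A vulnerable vertex is flipping because $\sigma_u = -d_S(u) \le -1 < 0$ while $\sigma_u + 2d_S(u) = d_S(u) \ge 1 > 0$. After exposing $N(v) = U$ the indicators of vulnerability are independent across $u\in U$, so an ordinary binomial Chernoff bound suffices, which also removes the need for the Bernstein/coupling machinery you invoke in your final paragraph. If you want to repair your proposal, you should replace the continuous window by the exact event $\sigma_u = -d_S(u)$ with $d_S(u)\ge 1$ (or, at minimum, by the integer target $\sigma_u = -1$ paired with $d_S(u)\ge 1$), and either fix $S$ first or explicitly control the hypergeometric tail and the correlations through $S$ — as written, the argument does not go through.
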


The proof of these lemmas involve heavy calculations that use many precise bounds for the binomial distribution, so we put them in Section \ref{sec:technical-lemmas}.

Assume for now that these two lemmas hold, we can prove Lemma \ref{lem:day2}.

\begin{proof}[Proof of Lemma \ref{lem:day2}]
    Fix $\eps > 0$.
    Suppose $pn \ge C_1\log^2 n$ for the $C_1$ from Lemma \ref{lem:almost-pos-v} and $\adv p^{3/2}n^{1/2} \ge C_\eps\log n$ for some $C_\eps$ depending on $\eps$.
    We will show that one can choose $C_\eps$ large enough to have $|R_2| \ge \frac{n}{2} + 3\sqrt{\frac{n}{p}}$ with probability at least $1 - \eps$.


    Suppose $\adv p < 10$ for now.
    Let $c_1$ be $c_{10}$ from Lemma \ref{lem:unst-neighs-with-swing-neighs} and let $D = c_1\adv p^{3/2}n^{1/2}$.
    Then with probability $1 - O(n^{-1})$, everyone has at least $D$ neighbors that flip after day 1.

    We condition $G$ and $S$ on this event.
    Now every $D$-almost Red vertices will flip after day 2, by Definition \ref{defn:almost-red}.
    We want to apply Lemma \ref{lem:almost-pos-v}.
    We have $10c_1\sqrt{pn} \ge D \ge c_1C_\eps \log n$ since $p\adv \le 10$.
    Let $B = 10c_1$ and $c_2 = c_B$ from Lemma \ref{lem:almost-pos-v}.
    Take $C_2$ from this lemma, we ensure that $c_1C_\eps \ge C_2$ to apply it.
    We will get at least $\frac{n}{2} + c_2D\sqrt{\frac{n}{p}} = \frac{n}{2} + c_1c_2\adv pn$ $D$-almost Red vertices, with probability at least $1 - \frac{A\log^2 n}{c_2^2D^2}$ for some universal constant $A$.
    
    For the fail probability to be not more than $\eps$, we want $C_\eps \ge c_2^{-1}\sqrt{A/\eps}$.
    So we can choose
    \begin{equation*}
        C_\eps \defeq 2c_2^{-1}\sqrt{A/\eps} \vee c_1^{-1}C_2
    \end{equation*}
    to satisfy the conditions of both lemmas.
    
    Now to be able to choose $D$ at all, we need $10c_1\sqrt{pn} \ge C_\eps\log n$.
    However, this is only needed if we follow the second-day approach so far.
    If $10c_1\sqrt{pn} < C_\eps\log n$, this means $p\adv > 10$ so the first-day approach in \cite{TranVusparse} implies Red wins with probability $1 - o(1)$.
    The proof is complete.
\end{proof}

\subsection{Third Day and beyond}

Note that when $\adv pn \gtrsim \log n\sqrt{n/p}$ and $pn \gtrsim \log^2n \gg \log n$, we can apply either \cite[Lemma 2.4]{TranVusparse} then \cite[Lemma 2.5]{TranVusparse}, or \cite[Lemma 4.2]{jhk-etal2021}, to arrive at the inequalities
\begin{equation}  \label{eq:main-temp1}
    |B_3| \le .45n
    \ \text{ and } \
    |B_{t+1}| \le \frac{C}{pn}|B_t|,
    \ \text{ for each } t \ge 3,
    \ \text{ until } |B_t| \le \frac{pn}{4},
\end{equation}
with probability at least $1 - \eps/2$.
When there are at most $pn/4$ Blue vertices left, Red will win the next day if we also condition that every vertex has at least $pn/2$ neighbors, which happens with probability $1 - o(1)$.
The proof of Theorem \ref{thm:main-p<n^(-1/2)} is complete.

\section{Proof of technical lemmas}  \label{sec:technical-lemmas}

Before beginning, we introduce some extra notation to aid the presentation.

\begin{itemize}
    \item Let $S$ denote the set of defectors, or ``swing'' set.
    
    \item Similar to the notations for $\Clr$, let $\bR_t \defeq R(\bClr_t)$ and $\bB_t \defeq B(\bClr_t)$.
    Conversely $\bClr_t = \Clr(\bR_t, \bB_t)$.
    Note that $S\subset \bB_0$ and $R_0 = \bR_0 \cup S$.

    \item For each $v\in V$ and $t\in \N$, let $\dif_t(v) \defeq d_{R_t}(v) - d_{B_t}(v)$ and $\bdif_t(v) \defeq d_{\bR_t}(v) - d_{\bB_t}(v)$.

    \item For each subset $U\subset V$, let $G(U)$ be the induced subgraph over $U$.
\end{itemize}

\subsection{Proof of Lemma \ref{lem:unst-neighs-with-swing-neighs}}

To prove Lemma \ref{lem:unst-neighs-with-swing-neighs}, we follow the argument in the first half of the proof of \cite[Lemma 3.3]{jhk-etal2021}.

\begin{proof}[Proof of Lemma \ref{lem:unst-neighs-with-swing-neighs}]
    Let $D = c\adv p^{3/2}n^{1/2}$ for a constant $c$ chosen later.
    Fix an arbitrary choice of the $S$.
    This also fixes the coloring $\Clr$.
    Temporarily call the vertices $u$ such that $\bClr_1(u) = -1$ and $\Clr_1(u) = 1$ \emph{flipping}.
    It suffices to show that with probability $1 - O(n^{-1})$, every $v\in V$ has at least $D$ flipping neighbors, since the choices of $S$ are symmetrical.

    Let us narrow down to a special type of flipping neighbors that we can control.
    We call $u\in N(v)$ \emph{vulnerable} if $d_{\bR_0}(u) = d_{\bB_0\setminus S}(u)$ and $u$ has at least 1 neighbor in $S$.
    If $u$ is vulnerable, we have
    \begin{equation*}
        d_{\bR_0}(u) - d_{\bB_0}(u)
        = d_{\bR_0}(u) - d_{\bB_0\setminus S}(u) - d_S(u)
        = - d_S(u) < 0
        \implies \bClr_1(u) = -1.
    \end{equation*}
    We also have
    \begin{equation*}
        d_{R_0}(u) - d_{B_0}(u)
        = d_{\bR_0}(u) + d_S(u) - (d_{\bB_0}(u) - d_S(u))
        = d_S(u) < 0
        \implies \Clr_1(u) = 1.
    \end{equation*}
    Therefore a vulnerable vertex is flipping.
    The advantage is that we can easily lower bound the number of vulnerable neighbors of $v$.
    We claim that $v$ has at least $D$ vulnerable neighbors with probability $1 - O(n^{-2})$.

    We claim that with high probability $|N(v)\cap S| < .1\adv$.
    This is a $\Bin(\adv, p)$ random variable with mean $p\adv < 10$, so it is close to a Poisson variable.
    By Lemma \ref{lem:poisson-bound},
    \begin{equation*}
        \Pr\left(|N(v)\cap S| \ge .1\adv\right)
        \le 2\left(\frac{e\adv p}{.1\adv} \right)^{.1\adv}
        \le 2(30p)^{.1\adv} < n^{-2},
    \end{equation*}
    where the last inequality holds if $\adv > 20\log n$, which holds if $\adv p^{3/2}n^{1/2} \ge \log n$ and $p < n^{-1/2}\log n$.

    For each $U\subset V\setminus \{v\}$, we say $U$ is \emph{regular} if $|U\cap S| < .1\adv$ and $|U| \ge pn/2$.
    By the above and a Chernoff bound, $N(v)$ is regular with probability $1 - O(n^{-2})$.

    Fix a regular $U\subset V\setminus \{v\}$ and condition the probability space of $G$ on the event $U = N(v)$.
    Let $V^+ \defeq \bR_0 \setminus \{v\}$ and $V^- \defeq \bB_0 \setminus \{v\}$.
    Let $W \defeq (V\setminus \{v\}) \setminus U$ and let $W^+ \defeq V^+ \cap W$ and $W^- \defeq V^- \cap W$.
    We have $|W^-\cap S| = |W\cap S| = |S| - |U\cap S| - \one_S(v) > .8\adv$.

    
    Let $I_u$ be the event that $u$ is vulnerable.
    It is the conjunction of the two independent events: $\{d_{\bR_0}(u) = d_{\bB_0\setminus S}(u)\}$ and $\{d_S(u) \ge 1\}$.
    Consider the former, we have
    \begin{equation*}
        \Pr\left(
            d_{\bR_0}(u) = d_{\bB_0\setminus S}(u)
        \right)
        = \Pr\left(
            d_{V^+}(u) + \Clr_0(v) = d_{V^-\setminus S}(u)
        \right)
        \ge \frac{r_B}{\sqrt{p(1 - p)|V\setminus S|}}
        \ge \frac{r_B}{\sqrt{pn}},
    \end{equation*}
    for a constant $r_B$ depending on $B$.
    For the latter, we have
    \begin{equation*}
        \Pr(d_S(u) \ge 1) \ge \Pr(d_{W^-\cap S}(u) \ge 1) \ge 1 - (1 - p)^{|W^-\cap S|}
        \ge 1 - (1 - p)^{.8\adv}
        \ge 1 - e^{-.8p\adv} \ge s_Bp\adv,
    \end{equation*}
    for a constant $s_B$ depending on $B$, where the last inequality is due to $p\adv \le B$.
    Therefore
    \begin{equation*}
        \Pr(I_u) = \Pr\left(
            d_{\bR_0}(u) = d_{\bB_0\setminus S}(u)
        \right) \cdot \Pr(d_S(u) \ge 1)
        \ge \frac{r_Bs_B p\adv}{\sqrt{pn}}.
    \end{equation*}
    Since $\{I_u\}_{u\in U}$ are independent after exposing $U$ (note that $S$ is fixed), and there are $|U| \ge pn/2$ of them, the number of vulnerable neighbors of $v$ dominates $X \sim \Bin\bigl(\frac{pn}{2}, r_Bs_B\adv \sqrt{\frac{p}{n}}\bigr)$ in probability.
    Note that $\E{X} = \frac{1}{2}r_Bs_B \adv p^{3/2}n^{1/2} = \frac{r_Bs_B}{2c}D$.
    Choose $c = c_B = r_Bs_B/4$,
    by a Chernoff bound,
    \begin{equation*}
        \Pr(X < D)
        = \Pr\left(X < \frac{1}{2} \E{X} \right)
        \le \exp\left[
            \frac{1}{8}\adv p^{3/2}n^{1/2}
        \right]
        \le n^{-2},
    \end{equation*}
    given $\adv p^{3/2}n^{1/2} \ge 16\log n$.
    Since this holds for all regular $U$ and the probability of an irregular $U$ is less than $n^{-2}$, the total exceptional probability is $O(n^{-2})$.
    The proof is complete.
\end{proof}

\subsection{Proof of Lemma \ref{lem:almost-pos-v}}

Let us move on to the proof of Lemma \ref{lem:almost-pos-v}.
Since $D$ is fixed throughout the proof, we simply let $A_{v}$ denote the event that $v$ is $D$-almost Red.
We abuse the notation and let $A$ be the set of $D$-almost Red vertices.
We have $|A| = \sum_{v\in V}\one_{A_v}$, so
\begin{equation*}
    \E{|A|} = \sum_{v\in V} \Pr(A_v)
    \ \text{ and } \
    \Var{|A|} = \sum_{v\in V} \Var{\one_{A_v}}
    + \sum_{v_1\neq v_2\in V} \Cov{\one_{A_{v_1}}, \one_{A_{v_2}}}
\end{equation*}
Our strategy is simple: we lower bound $\E{|A|}$ by lower bounding each $\Pr(A_v)$, then upper bound $\Var{|A|}$ by upper bounding $\Cov{\one_{A_{v_1}}, \one_{A_{v_2}}}$ for each distinct pair $(v_1, v_2)$ (note that $\Var{\one_{A_v}} \le 1/4$ for each $v$).
Once the two bounds are available, we can just use Chebyshev's inequality \cite{chebyshev} to lower bound $|A|$ with high probability.

\subsubsection{Step 1: Lower bounding $\E{|A|}$}  \label{sec:almost-pos-set-exp}

Here is the main result of this step.
Throughout this section and the next, we assume the objects and conditions of Theorem \ref{thm:main-p<n^(-1/2)}.

\begin{claim}  \label{claim:almost-pos-set-exp}
    Let $A$ be the set of $D$-almost positive vertices.
    Let $B > 0$ be an arbitrary constant
    There are universal constants $C_1$ and $C_2$ (large enough) and a constant $c_B$ depending only on $B$ (small enough) such that if $pn \ge C_1\log^2 n$ and $B\sqrt{pn} \ge D \ge C_2\log n$, then
    \begin{equation*}
        \E{|A|} \ge \frac{n}{2} + c_BD\sqrt{\frac{n}{p}}.
    \end{equation*}
\end{claim}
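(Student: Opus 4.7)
The plan is to evaluate $\E{|A|} = \sum_{v \in V} \Pr(A_v)$ via a conditioning trick that isolates the edges incident to $v$. Fix $v$ and condition on the external graph $G' := G \setminus \{v\}$, and write $\bdif_0^{-v}(u) := d_{\bR_0 \setminus \{v\}}(u) - d_{\bB_0 \setminus \{v\}}(u)$, a quantity determined by $G'$. Given $G'$, for each $u \ne v$ the day-$1$ color $\bClr_1(u)$ takes a single value $Y_u \in \{\pm 1\}$ whenever $u \adj v$, namely the majority-with-tiebreak applied to $\bdif_0^{-v}(u) + \bClr_0(v)$ (with the tiebreak favoring $\bClr_0(u)$). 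Since the indicators $\{\one_{u \adj v}\}_{u \ne v}$ are i.i.d.\ Bernoulli$(p)$ independent of $G'$,
\[
    X_v := \sum_{u \in N(v)} \bClr_1(u) \;=\; \sum_{u \ne v} \one_{u \adj v}\, Y_u,
\]
and conditional on $G'$, $X_v$ has the distribution of $\Bin(|P|, p) - \Bin(|Q|, p)$ with $P := \{u : Y_u = +1\}$ and $Q := \{u : Y_u = -1\}$, hence conditional mean $pS$ (where $S := |P|-|Q|$) and conditional variance $p(1-p)(n-1) = \Theta(pn)$.

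The heart of the argument is to establish $\E{S} = \bClr_0(v) \cdot \Theta(\sqrt{n/p})$, and hence $\E{X_v} = \bClr_0(v) \cdot \Theta(\sqrt{pn})$, with a clean enough constant to survive into the final bound. The value $Y_u$ depends on $\bClr_0(v)$ only through the sign test on $\bdif_0^{-v}(u) + \bClr_0(v)$; swapping $\bClr_0(v)$ would flip $Y_u$ precisely when $\bdif_0^{-v}(u)$ lies at one of two integer values straddling the tiebreak ($\{-1,0\}$ for $u \in \bR_0$, $\{0,1\}$ for $u \in \bB_0$). Since $\bdif_0^{-v}(u)$ is an integer sum with mean $O(p)$ and standard deviation $\Theta(\sqrt{pn})$, the local limit estimates gathered in Section~\ref{sec:binom-bound} give $\Pr(\bdif_0^{-v}(u) = k) = \Theta(1/\sqrt{pn})$ uniformly over bounded $k$, yielding per-vertex biases of order $1/\sqrt{pn}$. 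Summed over the $\Theta(n)$ choices of $u$, these biases combine to $\Theta(\sqrt{n/p})$, provided the small $O(p)$ drifts of $\bdif_0^{-v}(u)$ for $u \in \bR_0$ versus $u \in \bB_0$ cancel symmetrically rather than accumulating to leading order. This symmetric cancellation is what the introduction calls ``handling expressions more delicately with precise cancellations.''

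Next, I would apply Berry-Esseen to $X_v \mid G'$ to obtain $\Pr(A_v \mid G') = \Phi\!\left((D + pS)/\sqrt{p(1-p)(n-1)}\right) + O(1/\sqrt{pn})$, and then average over $G'$; because the standard deviation of $pS$ is at most $O(p\sqrt{n}) = o(\sqrt{pn})$ (using $p \ll 1$), the averaging reduces to plugging in $\E{pS}$, giving, for a vertex $v$ with $\bClr_0(v) = \sigma$,
\[
    \Pr(A_v) = \Phi(\sigma a + t) + O(1/\sqrt{pn}), \qquad a = \Theta(1),\ \ t := \frac{D}{\sqrt{p(1-p)(n-1)}} \in \bigl[\Theta(\tfrac{\log n}{\sqrt{pn}}),\, \Theta(B)\bigr].
\]
The elementary fact that $F(t) := \Phi(a + t) + \Phi(-a + t) - 1$ satisfies $F(0) = 0$ and $F'(t) = \phi(a+t) + \phi(-a+t) \ge c_B > 0$ uniformly on $[0, B]$ gives $F(t) \ge c_B t$. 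Summing the $n/2$ vertices of each color then yields $\E{|A|} - n/2 \ge (n/2)\bigl(c_B t - O(1/\sqrt{pn})\bigr) \ge c'_B\, D\sqrt{n/p}$, where the last step uses $D \ge C_2 \log n$ to absorb the Berry-Esseen error into a constant fraction of $c_B t$.

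The hardest step will be the bookkeeping behind the precise cancellations. The $O(p)$-drifts of $\bdif_0^{-v}(u)$ are color-dependent, and a blunt application of the local CLT would let them contribute a spurious $\Theta(\sqrt{pn})$ term to $S$---the exact order of the genuine tie-breaking bias one wants to isolate. To separate the two, one needs sharper local-limit estimates than a bare point-probability bound, combined with the Red/Blue balance of $\bClr_0$ at the subleading order. The conditions $pn \ge C_1 \log^2 n$ and $D \ge C_2 \log n$ enter through the relative error of the local CLT and the requirement that the summed Berry-Esseen error of order $\sqrt{n/p}$ be dominated by the target excess $c'_B D \sqrt{n/p}$.
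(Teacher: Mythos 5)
Your conditioning is genuinely different from the paper's: the paper exposes the neighborhood $N(v)$ together with the induced subgraph $G(N(v))$, so that the summands $\bClr_1(u)$ for $u \in N(v)$ become independent after further fixing a ``regular'' pair $(U, H)$; you instead expose the complementary data $G' = G[V \setminus \{v\}]$, so that the summands $\one_{u \adj v}Y_u$ are independent Bernoulli indicators times fixed signs $Y_u$. Your decomposition yields a cleaner conditional law (a plain signed binomial with variance exactly $p(1-p)(n-1)$, rather than a sum of heterogeneous $\pm 1$ variables), and you have correctly identified both the mechanism behind the $\Theta(\sqrt{n/p})$ excess---the tie-breaking flip at $\bdif_0^{-v}(u) \in \{-1,0\}$ or $\{0,1\}$ depending on $\bClr_0(u)$---and the danger, namely that the color-dependent $O(p)$ drift of $\bdif_0^{-v}(u)$ could contribute at the same order as the bias you want to isolate.

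However, the two steps carrying essentially all of the difficulty are asserted rather than proved. First, you claim $\sqrt{\Var{pS}} = O(p\sqrt{n})$, i.e., $\sqrt{\Var{S}} = O(\sqrt{n})$, so that averaging over $G'$ reduces to substituting $\E{S}$ into $\Phi$. But the $Y_u$ are not independent---any two share the edge $\{u, u'\}$---so this requires a covariance estimate $|\Cov{Y_u, Y_{u'}}| = O(1/n)$, which itself rests on a local-limit bound for the probability that toggling a single edge flips $Y_u$. This is provable, and in the spirit of the paper's Claim~\ref{claim:Cov(A_v1, A_v2)}, but it is a nontrivial lemma your proposal skips. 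Second, and more seriously, you flag the cancellation of the color-dependent $O(p)$ drifts as ``the hardest step,'' note that a blunt local CLT would let them feed a spurious $\Theta(\sqrt{pn})$ term into $pS$ of the same order as the target, and then stop. That cancellation is the crux of the claim; the paper handles it by comparing $X^\pm$ to i.i.d.\ binomials $Z^\pm$ and using the exact symmetry $\Pr(Z^+ \ge Z^-) - \tfrac12\Pr(Z^+ = Z^-) = \tfrac12$, with the residual error being precisely what forces $D \ge C_2\log n$. Your plan names the obstruction accurately but supplies no argument for it. (A minor slip: Section~\ref{sec:binom-bound} contains only Chernoff- and Poisson-type tail bounds, not the local-limit or binomial-comparison estimates you invoke; those are drawn from the Chakraborti et al.\ reference.)
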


To prove this claim, we want to lower bound $\Pr(A_v)$ for each $v\in V$.
We have the following claim
\begin{claim}  \label{claim:P(A_v)}
    Consider the objects and assumptions of Lemma \ref{lem:almost-pos-v}.
    Let $Z^+, Z^-$ be two independent copies of $\Bin(n/2 - \lceil pn \rceil, p)$.
    We have
    \begin{equation}  \label{eq:P(A_v)}
        \E{\one_{A_v}} \ge \Phi\left(\frac{D}{2\sqrt{pn}} + 2\bClr_0(v)\Pr(Z^+ = Z^-)\sqrt{pn}\right)
		\pm O\left(\frac{1}{\sqrt{pn}}\right).
    \end{equation}
\end{claim}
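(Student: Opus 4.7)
The plan is to approximate the distribution of $S_v := \sum_{u\in N(v)}\bClr_1(u)$ by a Gaussian whose mean and standard deviation match the argument of $\Phi$ in the claim, via a Berry--Esseen bound after a convenient conditioning.

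First, I would write $S_v = \sum_{u\ne v} Y_u$ with $Y_u := \one[u\sim v]\,\bClr_1(u)$ and decompose $\bdif_0(u) = \one[u\sim v]\bClr_0(v) + W_u$, where $W_u := \sum_{w\ne u,v}\one[u\sim w]\bClr_0(w)$ is the signed degree of $u$ into $V\setminus\{u,v\}$. The key point is that $W_u$ is independent of $\one[u\sim v]$, and on the event $\{u\sim v\}$ the value $\bClr_1(u)$ is determined by $\sgn(\bClr_0(v)+W_u)$ together with the tie-breaking rule when $W_u=-\bClr_0(v)$. This expresses $S_v$ as a sum of bounded variables whose randomness is indexed largely by disjoint edge sets, with dependencies only through the shared edges $\{u,w\}$.

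Next I would compute the mean $M := \E{S_v}$ and variance $\sigma^2 := \Var{S_v}$ to leading order. A case analysis on $\bClr_0(u)$ combined with the near-symmetry of $W_u$ yields $\E{\bClr_1(u)\mid u\sim v}=\bClr_0(v)\Pr(W_u=0)$, with an extra $2\bClr_0(v)\Pr(W_u=-\bClr_0(v))$ term when $\bClr_0(u)=\bClr_0(v)$. The local CLT, supplied by the binomial estimates of Section~\ref{sec:binom-bound}, lets me replace both $\Pr(W_u=0)$ and $\Pr(W_u=-\bClr_0(v))$ by the common value $\Pr(Z^+=Z^-)$ up to additive error $O(1/\sqrt{pn})$; the truncation size $n/2-\lceil pn\rceil$ in $Z^\pm$ uniformly absorbs the various $O(1)$ mismatches between the binomial sizes defining $W_u$ for Red versus Blue $u$. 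Averaging over the $\approx n/2$ vertices of each color yields a per-neighbor contribution of $2\bClr_0(v)\Pr(Z^+=Z^-)$, so $M = 2pn\bClr_0(v)\Pr(Z^+=Z^-)+o(\sqrt{pn})$. For the variance, $\Var{Y_u}=p-(\E{Y_u})^2\approx p$, and the pairwise covariances $\Cov{Y_u,Y_w}$ depend only on the single shared edge $\{u,w\}$; a first-order expansion in $\bClr_0(w)\one[u\sim w]$ gives $\Cov{Y_u,Y_w}=O(p^2/n)$, so $\sigma^2 = pn(1+o(1))$.

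For the Gaussian approximation I would condition on the $\sigma$-algebra $\mathcal{F}_v$ generated by all edges not incident to $v$. This freezes each $W_u$, turning the $Y_u$'s into conditionally independent scaled Bernoulli$(p)$ variables, so a standard Berry--Esseen bound gives $\Pr(S_v\ge -D\mid \mathcal{F}_v)=\Phi((M_v+D)/\sigma_v)\pm O(1/\sqrt{pn})$ for the conditional mean and standard deviation. Integrating against the law of $\{W_u\}$ together with the mean/variance estimates above yields the claim; the factor of $2$ in front of $\sqrt{pn}$ in $\Phi$'s argument is just a convenient weakening, since $D/(2\sqrt{pn})\le D/\sqrt{pn}$ and $\Phi$ is increasing. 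The main obstacle I anticipate is pinning down $M$ to precision $O(\sqrt{pn})$: the asymmetry between the Red and Blue cases and the extra $2\Pr(W_u=-\bClr_0(v))$ contribution for Red $u$ must combine cleanly into a common per-neighbor mean of $2\bClr_0(v)\Pr(Z^+=Z^-)$ after summing. Once this mean calculation is in hand, the Berry--Esseen and local CLT steps are routine.
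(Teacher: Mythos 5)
Your route is genuinely different from the paper's, and it is a plausible one. The paper conditions on $N(v)=U$ and $G(U)=H$ for a \emph{regular} pair $(U,H)$; the $\bClr_1(u)$, $u\in U$, are then conditionally independent because their remaining randomness lives on disjoint edge sets $\{u\}\times W$, and all the work goes into estimating $\eps_u=\E{\bClr_1(u)}$ under regularity. You instead condition on the $\sigma$-algebra $\mathcal{F}_v$ of edges \emph{not} incident to $v$, which freezes every $W_u$ and hence the deterministic label $c_u$ that $\bClr_1(u)$ would take on the event $\{u\sim v\}$. Then $\sum_{u\in N(v)}\bClr_1(u)=\sum_u c_u\one[u\sim v]$ is a sum of conditionally independent $\pm\mathrm{Ber}(p)$ terms, the conditional variance $\sigma_v^2=p(1-p)(n-1)$ is deterministic, and Berry--Esseen is immediate with error $O((pn)^{-1/2})$. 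Your per-vertex mean identity $\E{\bClr_1(u)\mid u\sim v}=\bClr_0(v)\Pr(W_u=0)+2\bClr_0(v)\Pr(W_u=-\bClr_0(v))\one[\bClr_0(u)=\bClr_0(v)]$ is correct, and summed over the two color classes it does give $\E{M_v}\approx 2pn\bClr_0(v)\Pr(Z^+=Z^-)$, matching the paper's $\sum_{u\in U}\eps_u$. Your approach buys a cleaner source of independence and a deterministic conditional variance; the paper's buys a deterministic conditional mean. Neither is for free.

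The price you pay is the step you gloss over with ``integrating against the law of $\{W_u\}$ yields the claim.'' After Berry--Esseen you have $\E{\one_{A_v}}=\E{\Phi((M_v+D)/\sigma_v)}\pm O((pn)^{-1/2})$, and $M_v=p\sum_u c_u$ is a \emph{random} variable. Since $\Phi$ is concave on $[0,\infty)$, Jensen goes the wrong way for a lower bound, so you need to actually show $\E{\Phi((M_v+D)/\sigma_v)}\ge \Phi\bigl((\E{M_v}+D)/\sigma_v\bigr)-O\bigl(\Var{M_v}/\sigma_v^2\bigr)$ via the second-order Taylor expansion of $\Phi$ (using $\|\Phi''\|_\infty<\infty$), and then verify $\Var{M_v}/\sigma_v^2\ll (pn)^{-1/2}$. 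Your covariance estimate $\Cov{c_u,c_w}=O(1/n)$ (equivalently $\Cov{Y_u,Y_w}=O(p^2/n)$) does give $\Var{M_v}=O(p^2n)$, hence $\Var{M_v}/\sigma_v^2=O(p)$, which is indeed $o((pn)^{-1/2})$ under $p\le n^{-1/2}\log^{1/4}n$; but none of this is stated, and it is the crux of why the conditioning on $\mathcal{F}_v$ is admissible. Two smaller issues: (i) the precision ``$M=2pn\bClr_0(v)\Pr(Z^+=Z^-)+o(\sqrt{pn})$'' is too weak — you need the error to be $O(\log n)$ so that, after dividing by $\sigma_v\approx\sqrt{pn}$ and using $D\ge C_2\log n$, the error can be absorbed into $D\mapsto D/2$, exactly as the paper does; this requires the quantitative local CLT rate (the $O((pn)^{-1/2})$ bound on $|\Pr(W_u=k)-\Pr(Z^+=Z^-)|$), not just qualitative closeness. (ii) The remark that ``the factor of $2$ in front of $\sqrt{pn}$ is just a convenient weakening'' conflates the two $2$'s: the $2$ dividing $D$ is a weakening, but the $2$ multiplying $\Pr(Z^+=Z^-)\sqrt{pn}$ is the genuine leading-order coefficient coming from $\eps_u=2\Pr(\bClr_1(u)=1)-1$, and your own mean computation produces it.
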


Suppose this holds, then we can prove Claim \ref{claim:almost-pos-set-exp}.

\begin{proof}[Proof of Claim \ref{claim:almost-pos-set-exp}]
    Claim \ref{claim:P(A_v)} implies for $|A| = \sum_{v\in V}\one_{A_v}$:
    \begin{equation*}
        \E{|A|} \ge \sum_{v\in V}
        \Phi\left(x_v
        \right) \pm O\left(\frac{\sqrt{n}}{\sqrt{p}}\right),
        \ \text{ where } \ 
        x_v \defeq \frac{D}{2\sqrt{pn}}
        + 2\bClr_0(v)\Pr(Z^+ = Z^-)\sqrt{pn}.
    \end{equation*}
    Note that each term inside $\Phi(\cdot)$ is less than $20$ in absolute value.
    Since the number of vertices $v$ for which $\bClr_0(v)$ are $1$ and $-1$ are both $n/2$, we can form pairs of opposite colors.
    For each pair $v, v'$, we have, by \cite[Lemma 2.4]{jhk-etal2021}, for a fixed positive function $f_0$,
    \begin{equation*}
        \Phi(x_v) + \Phi(x_{v'})
        \ge 1 + f_0(20)(x_v + x_{v'})
        = 1 + \frac{f_0(20)D}{\sqrt{pn}}.
    \end{equation*}
    This implies, if one allows $D \ge \log n$,
    \begin{equation*}
        \E{|A|} \ge \frac{n}{2} + \frac{f_0(20)D\sqrt{n}}{\sqrt{p}}
        \pm O\left(\frac{\sqrt{n}}{\sqrt{p}}\right)
        \ge \frac{n}{2} + \frac{f_0(20)D}{2}\sqrt{\frac{n}{p}}.
    \end{equation*}
    The proof is complete.
\end{proof}

The main technical challenge is Claim \ref{claim:P(A_v)}.
Fix $v\in V$.
We have $\Pr(A_v) = \Pr(\bdif_1(v) \ge -D)$.
There are two ways to write $\bdif_1(v)$ as a sum of random variables.
The first choice is $\bdif_1(v) = \sum_{u\in N(v)} \bClr_1(u)$, which is hard to analyze because $N(v)$ is random.
The second is $\bdif_1(v) = \sum_{u\in V} \bClr_1(u)\one_{u\adj v}$.
This is still hard because $\{\bClr_1(u)\}_{u\in V}$ are dependent random variables.

The key actually lies in the first method.
Fix $U\subset V$ and condition $G$ on the event $U = N(v)$, then the first sum has a fixed number of summands.
Further a subgraph $H$ over $U$ and condition on $G(U) = H$, the variables in the sum are now independent.

The vast majority of cases for $U$ will be \emph{regular}, meaning the statistics of $U$ and $G(U)$ are close to their expected values.
For convenience, we define the notations (similar to Lemma \ref{lem:unst-neighs-with-swing-neighs}'s proof):
\begin{itemize}
    \item $U^+ = \bR_0 \cap U$ and $U^- = \bB_0 \cap U$.

    \item $W = (V\setminus \{v\}) \setminus U$, then $W^+ = \bR_0 \cap W$ and $W^- = \bB_0 \cap W$.

    \item $V^+ = U^+ \cup W^+$ ($= \bR_0 \setminus \{v\}$ and $V^- = U^- \cup W^- = \bB_0 \setminus \{v\}$.

    \item We call the set $U\subset V$ and the subgraph $H$ drawn over $U$ \emph{regular} if they satisfy all conditions below.
    \begin{equation*}  \label{eq:regular-subset}
    \begin{aligned}
        |U^+| = \frac{pn}{2} \pm 3\sqrt{pn\log n},
        \ \quad \
        |U^-| = \frac{pn}{2} \pm 3\sqrt{pn\log n},
        \\
        \|\mathbf{d}_U\|_\infty \le \log n,
        \quad
        \left| \one_{|U|}^T\mathbf{d}_U \right| \le 5\log n,
        \ \text{ and } \
        \|\mathbf{d}_U\|_2 \le 3\sqrt{pn\log n},
    \end{aligned}
    \end{equation*}
    where $\mathbf{d}_U \defeq [d_{U^+}(u, H) - d_{U^-}(u, H)]_{u\in U}$ is a vector in $\R^{|U|}$.
\end{itemize}

Simple Chernoff bounds show that $U$ is regular with probability $1 - O(n^{-2})$.
The strategy is to analyze $\bdif_1(v)$ conditioned on a regular $U = N(v)$ and $H = G(U)$ to take advantage the independence of its summands.
Let us proceed with the formal proof.

\begin{proof}[Proof of Claim \ref{claim:P(A_v)}]
    Fix subset $U\subset V$ and subgraph $H$ over $U$, that are both regular, and condition $G$ on the events $N(v) = U$ and $H = G(U)$.
    Until noted otherwise, we omit them from the probability notation.
    
    As discussed above, we write
    $\Pr(A_v) = \Pr(\bdif_1(v) \ge -D)$, where $\bdif_1(v) = \sum_{u\in U} \bClr_1(u)$ is a sum of independent random variables.
    By the Berry-Esseen theorem \cite{esseen}, one can replace this sum with a Gaussian variable with the same expectation and variance, at the cost of some error.
    We have
    \begin{equation*}
        \E{\bdif_1(v)} = \sum_{u\in U} \E{\bClr_1(u)} = \sum_{u\in U} \eps_u
        \ \text{ where } \
        \eps_u \defeq \E{\bClr_1(u)}.
    \end{equation*}
    The notation $\eps_u$ is convenient because one can easily write
    \begin{equation*}
        \Var{\bClr_1(u)} = 1 - \eps_u^2
        \ \text{ and } \ 
        \E{\left|\bClr_1(u) - \E{\bClr_1(u)}\right|^3} = 1 - \eps_u^4,
    \end{equation*}
    the latter being important for the error term in the Berry-Esseen theorem.
    By the Berry-Esseen theorem, we have
    \begin{equation}  \label{eq:P(A_v)-proof-esseen1}
    \begin{aligned}
        \Pr(A_v) 
        & = \Phi\left(
            \frac{
                D + \E{\bdif_1(v)}
            }{
                \left(\Var{\bdif_1(v)}\right)^{1/2}
            }
        \right)
        \pm C_0 \frac{
            \sum_{u\in U} \E{\left|\bClr_1(u) - \E{\bClr_1(u)}\right|^3}
        }{
            \left(\Var{\bdif_1(v)}\right)^{1/2}
        }
        \\
        & = \Phi\left(
            \frac{
                D + \sum_{u\in U} \eps_u
            }{
                \left(|U| - \sum_{u\in U} \eps_u^2\right)^{3/2}
            }
        \right)
        \pm C_0 \frac{
            |U| - \sum_{u\in U} \eps_u^4
        }{
            \left(|U| - \sum_{u\in U} \eps_u^2\right)^{3/2}
        }.
    \end{aligned}
    \end{equation}
    Let us compute $\eps_u$ for $u\in U$ and then move on to their sum.
    Ideally, we want $\eps_u$ to be small, meaning $\Pr(\bClr_1(u) = 1)$ to be close to $1/2$.
    We have
    \begin{equation*}
    \begin{aligned}
        & \Pr(\bClr_1(u) = 1) = \Pr\left(
            d_{V^+}(u) - d_{V^-}(u) + \bClr_0(v) - \one_{U^-}(u) \ge 0
        \right)
        \\
        & \Pr\left(
            d_{W^+}(u) - d_{W^-}(u) + d_{U^+}(u) - d_{U^-}(u) \bClr_0(v) - \one_{U^-}(u) \ge 0
        \right).
    \end{aligned}
    \end{equation*}
    Note that in our conditioned space, the only random variables in the last expression are $d_{W^+}(u) \sim \Bin(|W^+|, p)$ and $d_{W^-}(u) \sim \Bin(|W^-|, p)$.
    Temporarily let $m_u$ to be the fixed part.
    We have
    \begin{equation*}
        m_u \defeq d_{U^+}(u) - d_{U^-}(u) + \bClr_0(v) - \one_{U^-}(u)
        = \pm (\|\mathbf{d}_U\|_\infty + 2)
        = \pm O(\log n).
    \end{equation*}

    By Berry-Esseen, we can replace $X^+ - X^-$ with a Gaussian random variable with mean $\E{X^+ - X^-} = p(|W^+| - |W^-|)$ and variance $\Var{X^+ - X^-} = p(1 - p)|W|$, at the cost of an error
    \begin{equation*}
        C_0 \frac{p(1 - p)[p^2 + (1 - p)^2]|W|}{[p(1 - p)|W|]^{3/2}}
        \le \frac{2C_0}{\sqrt{pn}}.
    \end{equation*}
    We have, by regularity of $U$,
    \begin{equation}  \label{eq:P(a_v)-proof-eps_u-crude}
    \begin{aligned}
        & \Pr(\bClr_1(u) = 1)
        = \Pr\left(
            X^+ - X^- + m_u \ge 0
        \right)
        = \Phi\left(
            \frac{m_u + p(|W^+| - |W^-|)}{\sqrt{p|W|}}
        \right)
        \pm \frac{2C_0}{\sqrt{pn}}
        \\
        & = \frac{1}{2}
        + O\left(
            \frac{m_u + p(|W^+| - |W^-|)}{\sqrt{p|W|}}
        \right)
        \pm \frac{2C_0}{\sqrt{pn}}
        =  \frac{1}{2}
        + O\left(
            \frac{\pm \log n \pm p\sqrt{pn\log n}}{\sqrt{pn}}
        \right)
        \pm \frac{2C_0}{\sqrt{pn}}
        \\
        & = \frac{1}{2} \pm O\left(\frac{\log n}{\sqrt{pn}}\right)
        \quad \implies \ 
        |\eps_u| = O\left(\frac{\log n}{\sqrt{pn}}\right).
    \end{aligned}
    \end{equation}
    This crude bound allows us to estimate $\Var{\bClr_1(v)}$ and the Berry-Esseen error in Eq. \eqref{eq:P(A_v)-proof-esseen1}.
    We have
    \begin{equation}  \label{eq:P(A_v)-proof-esseen1-var}
        \Var{\bClr_1(v)} = |U| - \sum_{u\in U}\eps_u^2
        = |U| \left(
            1 - O\left(\frac{\log^2 n}{pn}\right)
        \right).
    \end{equation}
    Recall that we are allowed $pn \ge C\log^2 n$ for a large enough $C$, so this bound is still meaningful.
    For the error term, temporarily let $\delta = (pn)^{-1/2}\log n$, we have
    \begin{equation}  \label{eq:P(A_v)-proof-esseen1-err}
        \frac{|U| - \sum_{u\in U}\eps_u^4}{
            \left( |U| - \sum_{u\in U}\eps_u^2 \right)^{3/2}
        }
        = \frac{|U|(1 - O(\delta^4))}{|U|^{3/2}(1 - O(\delta^2))^{3/2}}
        = O\left( \frac{1}{\sqrt{pn}} \right).
    \end{equation}
    We keep these bounds in mind for later use.

    Now let us estimate $\sum_{u\in U}\eps_u$ more carefully.
    We expect more cancellations than simply adding the crude bounds in Eq. \eqref{eq:P(a_v)-proof-eps_u-crude}.
    By \cite[Lemma 2.5]{jhk-etal2021}, we have
    \begin{equation*}
    \begin{aligned}
        & \sum_{u\in U} \Pr(\bClr_1(u) = 1)
        = \sum_{u\in U} \Pr\left(
            X^+ - X^- + m_u \ge 0
        \right)
        \\
        & = \sum_{u\in U} \left[
            \Pr(X^+ \ge X^-) + m_u\Pr(X^+ = X^-)
            \pm O\left(\frac{m_u^2}{p|W|}\right)
        \right]
        \\
        & = |U|\Pr(X^+ \ge X^-)
        + \Pr(X^+ = X^-)\sum_{u\in U}m_u
        \pm O\left(\frac{1}{pn}\sum_{u\in U}m_u^2\right).
    \end{aligned}
    \end{equation*}
    Let us simplify the above expression.
    We have
    \begin{equation*}
    \begin{aligned}
        \sum_{u\in U} m_u
        & = \sum_{u\in U} \left(d_{U^+}(u) - d_{U^-}(u)\right)
        + \bClr_0(v)|U| - |U^-|
        = \one_{|U|}^T\mathbf{d}_U
        + \bClr_0(v)|U| - |U^-|
        \\
        & = \bClr_0(v)|U| - \frac{|U|}{2} \pm 4\sqrt{pn\log n},
    \end{aligned}
    \end{equation*}
    where the last estimate is due to $U$ being regular.
    Next, also by regularity,
    \begin{equation*}
    \begin{aligned}
        \sum_{u\in U} m_u^2
        & \le 2\sum_{u\in U} \left[\left(d_{U^+}(u) - d_{U^-}(u)\right)^2 + 4\right]
        = \|\mathbf{d}_U\|_2^2 + 8|U|
        \le 10pn\log n.
    \end{aligned}
    \end{equation*}
    Therefore
    \begin{equation}  \label{eq:P(A_v)-proof-sum-eps_u-temp1}
    \begin{aligned}
        \sum_{u\in U} \Pr(\bClr_1(u) = 1)
        & = |U|\Pr(X^+ \ge X^-)
        + \Pr(X^+ = X^-) \left(
            \bClr_0(v)|U| - \frac{|U|}{2} \pm 4\sqrt{pn\log n}
            \right)
        \pm O(\log n)
        \\
        & = |U|\left(
            \Pr(X^+ \ge X^-) - \frac{\Pr(X^+ = X^-)}{2}
            + \bClr_0(v)\Pr(X^+ = X^-)
        \right)
        \pm O(\log n).
    \end{aligned}
    \end{equation}
    Ideally, we want the first term to simplify $|U|/2$, so that the sum on the left-hand side is close to $|U|/2$, meaning $\sum_{u\in U}\eps_u$ is close to $0$.
    This will work out if $X^+$ and $X^-$ have identical distributions.
    While this is not the case, we can replace them with i.i.d. random variables at some cost.

    Let $Z^+$ and $Z^-$ be two independent copies of $\Bin(n/2 - \lceil pn \rceil, p)$.
    By \cite[Corollary 2.8]{jhk-etal2021}, we can replace $X^\pm$ respectively with $Z^\pm$ in $\Pr(X^+\ge X^-)$ and $\Pr(X^+ = X^-)$ for an error of at most
    \begin{equation*}
        O\left(\frac{p\bigl(||W^+| - (n/2 - \lceil pn \rceil)| \vee ||W^-| - (n/2 - \lceil pn \rceil)|\bigr)}{\sqrt{pn}}\right)
        = O\left(\frac{p\sqrt{pn\log n}}{\sqrt{pn}} \right)
        = O(p\sqrt{\log n}).
    \end{equation*}
    Now by symmetry,
    $
    ~\Pr(Z^+\ge Z^-) - \frac{1}{2}\Pr(Z^+ = Z^-) = \frac{1}{2},
    $
    so Eq. \eqref{eq:P(A_v)-proof-sum-eps_u-temp1} becomes
    \begin{equation*}
    \begin{aligned}
        \sum_{u\in U} \Pr(\bClr_1(u) = 1)
        & = |U|\left[\frac{1}{2} \pm O(p\sqrt{\log n}) + \bClr_0(v)\Pr(Z^+ = Z^-)\right] \pm O(\log n) \\
        & = \frac{|U|}{2} + \bClr_0(v)|U|\Pr(Z^+ = Z^-) \pm O(p|U|\sqrt{\log n}) \pm O(\log n) \\
        & = \frac{|U|}{2} + \bClr_0(v)|U|\Pr(Z^+ = Z^-) \pm O\bigl(\log n\bigr),
    \end{aligned}
    \end{equation*}
    where we use the fact $p|U| = O(p^2n) = O(\sqrt{\log n})$.
    Since $\eps_u = 2\Pr(\bClr_1(u) = 1) - 1$, we have
    \begin{equation}  \label{eq:P(A_v)-proof-sum-eps_v}
        \sum_{u\in U} \eps_u = 2\bClr_0(v)|U|\Pr(Z^+ = Z^-) \pm O\bigl(\log n\bigr).
    \end{equation}
    Note that the first term on the right-hand side is $\Theta(\sqrt{pn})$, so it is indeed the main term.

    Now we can finally estimate $\Pr(A_v)$.
    Plugging Eqs. \eqref{eq:P(A_v)-proof-esseen1-var}, \eqref{eq:P(A_v)-proof-esseen1-err}, \eqref{eq:P(A_v)-proof-sum-eps_v} into Eq. \eqref{eq:P(A_v)-proof-esseen1}, we get
    \begin{equation*}
        \Pr(A_v)
        = \Phi\left(
            \frac{
                D + 2\bClr_0(v)|U|\Pr(Z^+ = Z^-) \pm O\bigl(\log n\bigr)
            }{
                \sqrt{|U|(1 - O(\delta^2))}
            }
        \right)
        \pm O\left(\frac{1}{\sqrt{pn}}\right),
    \end{equation*}
    where $\delta = (pn)^{-1/2}\log n$ as previously defined.
    We analyze the term inside $\Phi(\cdot)$.
    By the regularity of $U$, $|U| = pn(1 \pm O(\delta))$, so we can write this term as
    \begin{equation}  \label{eq:P(A_v)-proof-esseen2}
    \begin{aligned}
        & \frac{D}{\sqrt{|U|(1 - O(\delta^2))}}
        + \frac{
            2\bClr_0(v)\Pr(Z^+ = Z^-)\sqrt{|U|}
        }{\sqrt{1 - O(\delta^2)}}
        \pm \frac{O(\sqrt{\log n})}{\sqrt{|U|(1 - O(\delta^2))}}
        \\
        & = \frac{D}{\sqrt{pn(1 \pm O(\delta))}}
        + 2\bClr_0(v)\Pr(Z^+ = Z^-)\sqrt{pn(1 \pm O(\delta))}
        \pm O\bigl(\delta^2\bigr)
        \\
        & = \frac{D}{\sqrt{pn}}
        + 2\bClr_0(v)\Pr(Z^+ = Z^-)\sqrt{pn}
        \pm O\left(
            \delta \left(
                \frac{D}{\sqrt{pn}} + \Pr(Z^+ = Z^-)\sqrt{pn} + \delta
            \right)
        \right).
    \end{aligned}
    \end{equation}
    Consider the error term.
    By the assumption $D \le 10\sqrt{pn}$, and the facts that $\Pr(Z^+ = Z^-) \le 4(pn)^{-1/2}$ and $\delta < .1$, this error term is just $O(\delta) = O((pn)^{-1/2}\log n)$.
    We get
    \begin{equation}  \label{eq:P(A_v)-proof-esseen3} 
        \Pr(A_v) = \Phi\left(
            \frac{D \pm O(\log n)}{\sqrt{pn}}
            + 2\bClr_0(v)\Pr(Z^+ = Z^-)\sqrt{pn}
        \right) \pm O\left( \frac{1}{\sqrt{pn}} \right).
    \end{equation}
    
    By allowing $D \ge C\log n$ for a sufficient constant $C$, we have
    $D\pm O(\log n) \ge D/2$.
    Eq. \eqref{eq:P(A_v)-proof-esseen3} then implies
    \begin{equation*}
        P(A_v) = \Phi\left(
            \frac{D}{2\sqrt{pn}}
            + 2\bClr_0(v)\Pr(Z^+ = Z^-)\sqrt{pn}
        \right)
        \pm O\left(\frac{1}{\sqrt{pn}}\right),
    \end{equation*}
    which is the desired expression.
    We are not done yet, since this is actually $\Pr(A_v \mid U = N(v), H = G(U))$ for the fixed regular pair $(U, H)$ that we have been using.
    However, since the total probability of irregular pairs is $O(n^{-2})$, which gets absorbed by the error term $O((pn)^{-1/2})$ above, this does not affect $\Pr(A_v)$ in the general case.
    The proof is complete.
\end{proof}

\subsubsection{Step 2: Upper bounding $\Var{|A|}$}

Below is the main claim

\begin{claim}  \label{claim:almost-pos-set-var} 
    Let $A$ be the set of $D$-almost positive vertices.
    Let $B > 0$ be an arbitrary constant.
    There are universal constants $C_1$, $C_2$ (large enough) such that if $pn \ge C_1\log^2 n$ and $B\sqrt{pn} \ge D \ge C_2\log n$, then
    \begin{equation*}
        \Var{|A|} = O\left(\frac{n\log^2 n}{p} \right).
    \end{equation*}
\end{claim}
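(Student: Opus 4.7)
The plan is to expand
\[
\Var{|A|} = \sum_v \Var{\one_{A_v}} + \sum_{v_1 \ne v_2} \Cov{\one_{A_{v_1}}, \one_{A_{v_2}}},
\]
where the diagonal is at most $n/4$ and is absorbed by the target bound. For the off-diagonal terms, I would fix a pair $(v_1,v_2)$ and mimic the structure of the proof of Claim~\ref{claim:P(A_v)}, but applied to the joint distribution of $(\bdif_1(v_1), \bdif_1(v_2))$. Specifically, expose both neighborhoods $N(v_1) = U_1$, $N(v_2) = U_2$ and the subgraph on $\{v_1,v_2\}\cup U_1\cup U_2$, restricting to the event that this exposure is \emph{regular} in the natural pairwise analogue of the single-vertex notion; standard Chernoff estimates bound the exceptional probability by $O(n^{-c})$ for large $c$.

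The point of this conditioning is that the values $\{\bClr_1(u) : u \in U_1 \cup U_2\}$ then become mutually independent: each one depends only on the exposed data together with the edges from $u$ to $W \defeq V \setminus (\{v_1,v_2\}\cup U_1\cup U_2)$, and these edge sets are disjoint across different $u$. This yields the decomposition
\[
\bdif_1(v_1) = Y + X_1, \qquad \bdif_1(v_2) = Y + X_2,
\]
where $Y = \sum_{u\in U_1\cap U_2}\bClr_1(u)$, $X_i = \sum_{u \in U_i \setminus U_{3-i}}\bClr_1(u)$, and $Y, X_1, X_2$ are conditionally independent. I would then apply a bivariate Berry--Esseen bound to approximate $(\bdif_1(v_1),\bdif_1(v_2))$ by a bivariate Gaussian with matching first and second moments, having marginal variances $\approx pn$ and covariance $\Var{Y}\approx |U_1\cap U_2|$, i.e.\ correlation $\rho_{12}\approx |U_1\cap U_2|/(pn)$. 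Expanding the Gaussian probability in powers of $\rho_{12}$ and performing the same precise cancellations as in Claim~\ref{claim:P(A_v)} (tracking the $\Pr(Z^+ = Z^-)$ contributions via the bivariate form of \cite[Lemma~2.5]{jhk-etal2021}) should give
\[
\Pr(A_{v_1}\cap A_{v_2}) - \Pr(A_{v_1})\Pr(A_{v_2}) = O\!\left(\frac{|N(v_1)\cap N(v_2)| + \log^2 n}{pn}\right).
\]
Since $\sum_{v_1\ne v_2}\E{|N(v_1)\cap N(v_2)|} = O(n^3 p^2)$ by a direct calculation, the first term sums to $O(n^2 p)$, which is at most $O(n\log^2 n/p)$ under the hypothesis $p \le n^{-1/2}\log^{1/4}n$; the second term sums directly to $O(n\log^2 n/p)$.

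The main obstacle is engineering this cancellation. Naively bounding the joint and marginals separately gives a Berry--Esseen error of $O((pn)^{-1/2})$ per pair, which sums to $O(n^{3/2}/\sqrt{p})$ and can exceed the target when $\sqrt{pn}\gg \log^2 n$. The remedy is never to expand $\Pr(A_{v_1}\cap A_{v_2})$ and $\Pr(A_{v_1})\Pr(A_{v_2})$ separately, but rather to compute their \emph{difference} directly from the bivariate expansion, so that the $\Theta(\sqrt{pn})$-scale pieces coming from $\bClr_0(v_i)\Pr(Z^+ = Z^-)\sqrt{pn}$, and likewise the $\Theta(1)$-scale pieces from the $D/\sqrt{pn}$ shift, cancel exactly between the two sides. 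This is the joint analogue of the delicate identity $\sum_u \epsilon_u = 2\bClr_0(v)|U|\Pr(Z^+=Z^-) \pm O(\log n)$ derived in Claim~\ref{claim:P(A_v)}, and propagating it through the bivariate Taylor expansion is where essentially all the technical work will live.
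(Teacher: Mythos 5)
Your outer strategy matches the paper: decompose $\Var{|A|}$ into the diagonal (absorbed by the target) plus $\binom{n}{2}$ covariance terms, expose both neighborhoods and the subgraph on $U_1\cup U_2\cup U_3$, restrict to regular configurations, and exploit the fact that $\{\bClr_1(u):u\in U\}$ are conditionally independent. Your decomposition $\bdif_1(v_i)=Y+X_i$ with $Y=\sum_{u\in U_3}\bClr_1(u)$ and $X_i=\sum_{u\in U_i}\bClr_1(u)$ is exactly the right structure.

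However, the route you take from there --- a bivariate Berry--Esseen approximation to $(\bdif_1(v_1),\bdif_1(v_2))$ followed by a Taylor expansion in the Gaussian correlation $\rho_{12}$ --- has a genuine gap that you yourself identify but do not resolve. A Berry--Esseen bound controls $|\Pr((X_1,X_2)\le(x_1,x_2))-\Phi_2(x_1,x_2;\rho)|$ and $|\Pr(X_i\le x_i)-\Phi(x_i)|$ \emph{separately}, each with error $O((pn)^{-1/2})$, and there is no general principle that these errors cancel in $\Pr(A_{v_1}\cap A_{v_2})-\Pr(A_{v_1})\Pr(A_{v_2})$. Since the covariance you are after is $O(\log^2 n/(pn))$, which is dwarfed by $(pn)^{-1/2}$, you cannot simply subtract Berry--Esseen estimates; you would need a bespoke ``Edgeworth-for-covariances'' theorem that you neither cite nor prove. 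Saying one should ``compute the difference directly from the bivariate expansion'' names the desideratum but is not an argument.

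The paper's proof resolves exactly this difficulty, and does so by \emph{avoiding} the bivariate CLT entirely. After conditioning on the event $\mathcal{J}=\{\bClr_1(u)=j_u:u\in U_3\}$ (i.e.\ on $Y$), the events $A_{v_1}$ and $A_{v_2}$ become independent, so $\E{(\one_{A_{v_1}}-h_1)(\one_{A_{v_2}}-h_2)\mid\mathcal{J}}$ factors as $\bigl(H_1(\mathcal{J})-h_1\bigr)\bigl(H_2(\mathcal{J})-h_2\bigr)$, and the covariance is the $\mathcal{J}$-average of this product. Each factor is then bounded by $O(\log n/\sqrt{pn})$ via two \emph{univariate} Berry--Esseen comparisons: $H_i(\mathcal{J})$ varies by only $O(|U_3|/\sqrt{pn})=O(\log n/\sqrt{pn})$ as $\mathcal{J}$ ranges over its support (a Lipschitz-in-$Y$ bound, using that $\bdif_1(v_i)$ shifts by $O(|U_3|)$), and $H_i$ varies by $O(\log n/\sqrt{pn})$ across regular tuples. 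The multiplicative structure is what makes the $O((pn)^{-1/2})$-scale Berry--Esseen errors harmless: they cancel \emph{inside each factor} when you take differences $H_i(\mathcal{J})-H_i(\mathcal{K})$ or $H_i(U,H)-H_i(U',H')$, because the same Gaussian expansion is subtracted from itself. In your formulation, the analogue would be to work with $\Cov{f_1(Y),f_2(Y)}$ where $f_i(y)=\Pr(y+X_i\ge -D)$ and to use only a Lipschitz bound on $f_i$, rather than a joint Gaussian approximation. I recommend replacing the bivariate Berry--Esseen step with this conditional-independence factorization.
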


The key idea is to write
\begin{equation*}
    \Var{|A|} = \sum_{v\in V} \Var{\one_{A_v}} + \sum_{v_1\neq v_2}\Cov{\one_{A_{v_1}}, \one_{A_{v_2}}}
\end{equation*}
Since the first part is at most $n/4$, it suffices to consider the covariances.
Fix a distinct pair $v_1, v_2\in V$, we have the claim

\begin{claim}  \label{claim:Cov(A_v1, A_v2)}
    Under the settings of Claim \ref{claim:almost-pos-set-var}, we have
    \begin{equation*}
        \left| \Cov{\one_{A_{v_1}}, \one_{A_{v_2}}} \right|
        = O\left( \frac{\log^2 n}{pn} \right).
    \end{equation*}
\end{claim}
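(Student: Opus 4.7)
The plan is to exploit a carefully chosen conditioning that renders the relevant colors $\bClr_1(u)$ conditionally independent, reducing the covariance estimate to variance calculations that can be controlled via Berry-Esseen (as in the proof of Claim~\ref{claim:P(A_v)}). Let $\mathcal{F}$ denote the $\sigma$-algebra generated by the neighborhoods $U_1 \defeq N(v_1)$ and $U_2 \defeq N(v_2)$, the induced subgraphs $G(U_1)$ and $G(U_2)$, and all edges between $U_1$ and $U_2$. After conditioning on $\mathcal{F}$, each $\bClr_1(u)$ for $u\in U_1\cup U_2$ depends only on the edges from $u$ to $V\setminus(U_1\cup U_2\cup\{v_1,v_2\})$; since these edge sets are disjoint across distinct $u$, the family $\{\bClr_1(u)\}_{u\in U_1\cup U_2}$ is mutually independent given $\mathcal{F}$ (if $v_i\in U_1\cup U_2$, which occurs when $v_1\sim v_2$, its day-$1$ color is fully $\mathcal{F}$-measurable and merely contributes a fixed offset). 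This yields the decomposition $\bdif_1(v_1)=S_1+C$ and $\bdif_1(v_2)=S_2+C$, where $S_1, S_2, C$ are independent sums of $\bClr_1(u)$ over $U_1\setminus U_2$, $U_2\setminus U_1$, and $U_1\cap U_2$ respectively.

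I then split via the total-covariance identity
\[
\Cov{\one_{A_{v_1}},\one_{A_{v_2}}} = \E{\Cov{\one_{A_{v_1}},\one_{A_{v_2}} \mid \mathcal{F}}} + \Cov{\Pr(A_{v_1}\mid\mathcal{F}),\Pr(A_{v_2}\mid\mathcal{F})}.
\]
For the first (``inner'') term, conditional independence of $S_1$ and $S_2$ given $(\mathcal{F},C)$ gives $\Cov{\one_{A_{v_1}},\one_{A_{v_2}}\mid\mathcal{F}} = \Cov{p_1(C),p_2(C)\mid\mathcal{F}}$ with $p_i(c)\defeq \Pr(S_i\ge -D-c\mid\mathcal{F})$. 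Applying Berry-Esseen to $S_i$ (whose conditional variance is $\Theta(pn)$) shows $p_i$ is Lipschitz in $c$ with slope $O(1/\sqrt{pn})$ up to an additive uniform residual of $O(1/\sqrt{pn})$. Hence $\Var{p_i(C)\mid\mathcal{F}} \le O\bigl(\Var{C\mid\mathcal{F}}/(pn) + 1/(pn)\bigr) \le O\bigl(|U_1\cap U_2|/(pn) + 1/(pn)\bigr)$, and averaging with $\E{|U_1\cap U_2|} = O(p^2 n)$ bounds this contribution by $O(p + 1/(pn))$. For the second (``outer'') term, the calculation in the proof of Claim~\ref{claim:P(A_v)}---in particular Eq.~\eqref{eq:P(A_v)-proof-esseen3}---gives $\Pr(A_{v_i}\mid\mathcal{F}) = \Phi(x_i(\mathcal{F})) \pm O(1/\sqrt{pn})$ with $x_i(\mathcal{F})$ varying by only $O(\log n/\sqrt{pn})$ across typical $\mathcal{F}$; the $1/\sqrt{2\pi}$-Lipschitz property of $\Phi$ then yields $\Var{\Pr(A_{v_i}\mid\mathcal{F})} = O(\log^2 n/(pn))$, and Cauchy-Schwarz bounds the second term by $O(\log^2 n/(pn))$.

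Combining the two contributions gives $|\Cov{\one_{A_{v_1}},\one_{A_{v_2}}}| = O(p) + O(\log^2 n/(pn))$. The hypothesis $p\le n^{-1/2}\log^{1/4}n$ forces $p^2 n \le \log^{1/2}n \le \log^2 n$, whence $p \le \log^2 n/(pn)$, and the bound collapses to the desired $O(\log^2 n/(pn))$. The main obstacle is extending the notion of ``regular'' from a single neighborhood (as defined in Section~\ref{sec:almost-pos-set-exp}) to the pair $(U_1,U_2)$: one needs concentration of $|U_1\cap U_2|$, control of the degree sequences in each induced subgraph, and bounds on the number of edges between $U_1$ and $U_2$, each following from standard Chernoff estimates with exceptional probability $O(n^{-2})$. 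These ensure the Berry-Esseen-based approximations above hold uniformly on the typical event, so that atypical $\mathcal{F}$ contribute only $O(n^{-2})$, negligible against the target bound.
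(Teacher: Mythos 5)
Your proposal is correct, but it organizes the argument in a genuinely different way from the paper's proof, so let me compare. The paper conditions on the neighborhood structure $(U_1,U_2,U_3,H)$ (where $U_3 = N(v_1)\cap N(v_2)$) and then further on the actual day-$1$ colors $\{j_u\}_{u\in U_3}$ of the shared neighbors. Under that double conditioning, $\one_{A_{v_1}}$ and $\one_{A_{v_2}}$ become conditionally independent, so the conditional expectation of $(\one_{A_{v_1}}-h_1)(\one_{A_{v_2}}-h_2)$ factors into a product; the proof then shows each factor is $O(\log n/\sqrt{pn})$ uniformly over regular $(\mathcal{J},\text{tuple})$ by tracking the Lipschitz dependence of the Berry-Esseen expression on the $U_3$-color sum $j$ and on the tuple, with the regularity condition $|U_3^\pm|\le \log n$ ensuring $|j-k| = O(\log n)$. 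You instead use the law of total covariance with respect to $\mathcal{F}$ (neighborhoods, induced subgraphs, cross-edges, but not the $U_3$-colors): the ``outer'' term is handled by essentially the same concentration of $\Pr(A_{v_i}\mid\mathcal{F})$ around $h_i$ plus Cauchy–Schwarz, while the ``inner'' term, which vanishes identically in the paper's decomposition, you control by conditioning on $C = \sum_{u\in U_3}\bClr_1(u)$, invoking the $O(1/\sqrt{pn})$-Lipschitz-plus-residual structure of $p_i(\cdot)$ from Berry-Esseen, and averaging $\Var{C\mid\mathcal{F}}\le|U_1\cap U_2|$ over $\E{|U_1\cap U_2|}=O(p^2n)$. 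This gives you an extra $O(p)$ term that the paper never sees, and closing the gap to $O(\log^2 n/(pn))$ requires invoking the global hypothesis $p\le n^{-1/2}\log^{1/4}n$. The paper avoids this entirely, and with less bookkeeping, because it conditions on the colors themselves rather than their sum and then uses the regularity bound $|U_3|=O(\log n)$ directly rather than the unconditional expectation; your route also requires $\mathcal{F}$-measurable regularity of $|U_1\cap U_2|$ to keep $\Var{S_i\mid\mathcal{F}}=\Theta(pn)$, which you flag but do not carry out. Both approaches ultimately rest on the same engine — the Berry-Esseen estimate of Claim \ref{claim:P(A_v)} re-run under the richer joint conditioning and the observation that the shared dependence lives on the $O(\log n)$-sized set $U_1\cap U_2$ — so the substance is the same; your decomposition is a little more modular (inner/outer separation is clean), at the modest cost of an extra case-splitting constraint on $p$.
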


Suppose this holds, we can prove Claim \ref{claim:almost-pos-set-var}.
\begin{proof}[Proof of Claim \ref{claim:almost-pos-set-var}]
    By the decomposition above, we have
    \begin{equation*}
        \Var{|A|} \le \frac{n}{4} + \binom{n}{2} \cdot
        O\left( \frac{\log^2 n}{pn} \right)
        = O\left( \frac{n\log^2 n}{p} \right).
    \end{equation*}
    The proof is complete.
\end{proof}

Now we move to the main technical challenge.
Similarly to the proof of Claim \ref{claim:P(A_v)}, we want to prove Claim \ref{claim:Cov(A_v1, A_v2)} in an ``ideal'' environment, namely where the statistics of $N(v_1)$ and $N(v_2)$ are close to their expectations.

This time we want to expose everything inside both $N(v_1)$ and $N(v_2)$.
Fix $U\subset V$ with partition $U = U_1 \cup U_2 \cup U_3$ and a subgraph $H$ over $U$, and condition $G$ on the events $U_1 = N(v_1) \setminus N(v_2)$, $U_2 = N(v_2) \setminus N(v_2)$, $U_3 = N(v_1) \cap N(v_2)$ (all excluding $v_1$ and $v_2$) and $H = G(U)$.

We introduce some extra notation:
\begin{itemize}
    \item $U_{13} \defeq U_1 \cup U_3 = N(v_1) \setminus \{v_2\}$ and $U_{23} \defeq U_2 \cup U_3 = N(v_2) \setminus \{v_2\}$ in our conditioned space.

    \item $U^+ \defeq U\cap \bR_0$ and $U^- \defeq U\cap \bB_0$.
    Define the same ``Red'' and ``Blue'' versions for the sets defined above.

    \item $W = (V\setminus \{v_1, v_2\}) \setminus U$.
    Define $W^+$ and $W^-$ similarly to the above.

    \item $V^+ = U^+ \cup W^+$ and $V^- = U^- \cup W^-$.

    \item The tuple $(U_1, U_2, U_3, H)$ is \emph{regular} if the following conditions hold.
    \begin{equation*}
    \begin{aligned}
        |U_{13}^+|, |U_{13}^-|, |U_{23}^+|, |U_{23}^-| = \frac{pn}{2} \pm 3\sqrt{pn\log n}, \quad
		|U_3^+|, |U_3^-| \le \log n,
        \\
        \|\mathbf{d}_U\|_\infty \le \log n,
        \quad
        \|\mathbf{d}_U\|_2 \le 3\sqrt{pn\log n},
        \quad
        \left| \one_{|U_{13}}^T \mathbf{d}_{U_{13}} \right|
        \vee \left| \one_{|U_{23}}^T \mathbf{d}_{U_{23}} \right|
        \le 5\log n,
    \end{aligned}
    \end{equation*}
    where $\mathbf{d}_U = [d_{U^+}(u, H) - d_{U^-}(u, H)]_{u\in U} \in \R^{|U|}$ is the previously defined vector
    and $\mathbf{d}_{U_{13}} \in \R^{|U_{13}|}$ and $\mathbf{d}_{U_{23}} \in \R^{|U_{23}|}$ are its restrictions to $U_{13}$ and $U_{23}$ respectively.
\end{itemize}

Again, simple Chernoff bounds show that $U_1, U_2, U_3$ and $H$ are regular with probability $1 - O(n^{-2})$.
Now we can proceed to the formal proof.

\begin{proof}[Proof of Claim \ref{claim:Cov(A_v1, A_v2)}]
    Let $h_i = \E{\one_{A_{v_i}}}$ without any conditioning.
    Fix disjoint $U_1, U_2, U_3$ and subgraph $H$ over $U = U_1 \cup U_2 \cup U_3$ such that they are regular.
    Condition $G$ on the events $(N(v_1) \setminus \{v_2\}) \setminus N(v_2) = U_1$, 
    $(N(v_2) \setminus \{v_1\}) \setminus N(v_1) = U_2$,
    $(N(v_1) \setminus \{v_2\}) \cap (N(v_2) \setminus \{v_1\}) = U_3$,
    and $H = G(U)$.
    The covariance on this conditioned probability is off from the real covariance by an error of just $O(n^{-2})$.
    Until noted otherwise, we omit the events from the probability.

    We first rewrite the covariance as
    \begin{equation}  \label{eq:Cov(A_v1, A_v2)-proof-cov1}
        \Cov{\one_{A_{v_1}}, \one_{A_{v_2}}}
        = \E{(\one_{A_{v_1}} - h_1)(\one_{A_{v_2}} - h_2)}
    \end{equation}
    We want to split this into a product, but $A_{v_1}$ and $A_{v_2}$ are not independent.
    However, their only shared dependency is on the first day colors of their joint neighbors, $U_3$.
    
    Fix a set of first day colors $\{j_u\}_{u\in U_3}$ for these vertices.
    Condition the space further on the event $\mathcal{J} \defeq \{\forall u\in U_3, \ \bClr_1(u) = j_u\}$.
    This does not affect $\bClr_1(u)$ for the other $u\in U$ as they are all independent.
    We can rewrite the conditioned Eq. \eqref{eq:Cov(A_v1, A_v2)-proof-cov1} as
    \begin{equation*}
        \Cov{\one_{A_{v_1}}, \one_{A_{v_2}} \mid \mathcal{J}}
        = \left(
            \E{\one_{A_{v_1}} \mid \mathcal{J}} - h_1
        \right)
        \cdot \left(
            \E{\one_{A_{v_2}} \mid \mathcal{J}} - h_2
        \right).
    \end{equation*}
    Consider the factor $\E{\one_{A_{v_1}} \mid \mathcal{J}} - h_1$.
    Let $H_1(\mathcal{J}) = \E{\one_{A_{v_1}} \mid \mathcal{J}}$ for convenience.
    Let  $j \defeq \sum_{u\in U_3} j_u$.
    Reuse the computation leading Eq. \eqref{eq:P(A_v)-proof-esseen1} to get
    \begin{equation}  \label{eq:Cov(A_v1, A_v2)-proof-j-esseen1}
        H_1(\mathcal{J})
        = \Phi\left(
            \frac{
                D + \bClr_1(v_2)\one_{v_1\adj v_2}
                +  j + \sum_{u\in U_1} \eps_u
            }{
                (|U_1| - \sum_{u\in U_1}\eps_u^2)^{1/2}
            }
        \right)
        \pm O\left(
            \frac{
                |U_1| - \sum_{u\in U_1}\eps_u^4
            }{
                (|U_1| - \sum_{u\in U_1}\eps_u^2)^{3/2}
            }
        \right).
    \end{equation}
    In this space conditioned on $\mathcal{J}$, the variables $\eps_u = \E{\bClr_1(u)}$ for $u\in U_1$ are not affected since they are independent from $\{\bClr_1(u)\}_{u\in U_3}$.
    Note that this expression only depends on $j$ rather the specific values in $\{\bClr_1(u)\}_{u\in U_3}$.

    Fix another set of colors $\{k_u\}_{u\in U_3}$ and consider the event $\mathcal{K} = \{\forall u\in U_3, \ \bClr_1(u) = k_u\}$.
    Define $H_1(\mathcal{K})$ analogously to $H_1(\mathcal{J})$.
    The expression for $H_1(\mathcal{K})$ is almost identical to Eq. \eqref{eq:Cov(A_v1, A_v2)-proof-j-esseen1}, with $k = \sum_{u\in U_3} k_u$ replacing $j$.
    We then have
    \begin{equation}  \label{eq:Cov(A_v1, A_v2)-proof-j-k-diff}
        \left| H_1(\mathcal{J}) - H_1(\mathcal{K}) \right|
        = O\left( \frac{|j - k|}{(|U| - \sum_{u\in U_1}\eps_u^2)^{1/2}}
        \right)
        = O\left( \frac{|U_3|}{\sqrt{pn}}
        \right)
        = O\left( \frac{\log n}{\sqrt{pn}}
        \right),
    \end{equation}
    where the last two bounds are due to regularity and the analysis of $\eps_u$ in Eq. \eqref{eq:P(a_v)-proof-eps_u-crude}.

    This is an innovation from the proof in \cite{jhk-etal2021}.
    This allows us to indirectly analyze $H_1(\mathcal{J}) - h_1$ without directly computing $H_1(\mathcal{J})$.
    By showing that this expression does not change much when $\mathcal{J}$ changes, $H_1(\mathcal{J}) = \Pr(A_{v_1} \mid \mathcal{J})$ must be close to the same probability without conditioning on $\mathcal{J}$.
    
    Let $H_1 \defeq \Pr(A_{v_1} \mid U_1, U_2, U_3, H)$ (we write the background events explicitly here to differentiate this with $h_1$, which is $\Pr(A_{v_1})$ without any condition).
    We have, by Eq. \eqref{eq:Cov(A_v1, A_v2)-proof-j-k-diff},
    \begin{equation}  \label{eq:Cov(A_v1, A_v2)-proof-H_1(j)-H_1-diff}
        \left| H_1(\mathcal{J}) - H_1 \right|
        = O\left( \frac{\log n}{\sqrt{pn}} \right),
    \end{equation}

    We are halfway through bounding $H_1(\mathcal{J}) - h_1$.
    It remains to bound $H_1 - h_1$.
    The idea is not too different from the previous step.
    We write $H_1 = H_1(U_1, U_2, U_3, H)$, since we will look at the same variable conditioned on another regular tuple.
    First off, by almost analogous calculations to the steps leading up Eq. \eqref{eq:P(A_v)-proof-esseen3} in Section \ref{sec:almost-pos-set-exp}, we have
    \begin{equation*}
        H_1(U_1, U_2, U_3, H)
        = \Phi\left(
            \frac{D \pm O(\log n)}{\sqrt{pn}}
            + 2\bClr_0(v_1)\Pr(Z^+ = Z^-)\sqrt{pn}
        \right) \pm O\left( \frac{1}{\sqrt{pn}} \right).
    \end{equation*}
    Now fix another regular tuple $(U'_1, U'_2, U'_3, H')$.
    The same calculation applies, so we are left with
    \begin{equation*}
        \left| H_1(U_1, U_2, U_3, H)
        - H_1(U'_1, U'_2, U'_3, H') \right|
        = O\left( \frac{\log n}{\sqrt{pn}} \right).
    \end{equation*}
    The weights of the irregular tuples are just $O(n^{-2})$, which is asorbed by the error bound above, so we have
    \begin{equation}  \label{eq:Cov(A_v1, A_v2)-proof-H_1-h_1-diff}
        \left| H_1(U_1, U_2, U_3, H) - h_1 \right|
        = O\left( \frac{\log n}{\sqrt{pn}} \right).
    \end{equation}

    Back the to probability space conditioned on $(U_1, U_2, U_3, H)$.
    Now we can finally bound $H_1 - h_1$.
    Combining Eqs. \eqref{eq:Cov(A_v1, A_v2)-proof-H_1(j)-H_1-diff} and \eqref{eq:Cov(A_v1, A_v2)-proof-H_1-h_1-diff}, we get
    \begin{equation*}
        \left| H_1 - h_1 \right| = O\left( \frac{\log n}{\sqrt{pn}} \right).
    \end{equation*}
    An analogous calculation gives the same bound for $H_2 - h_2$.
    This implies
    \begin{equation*}
        \left| 
            \Cov{\one_{A_{v_1}}, \one_{A_{v_2}} \mid U_1, U_2, U_3, H}
        \right|
        = O\left( \frac{\log^2 n}{pn} \right).
    \end{equation*}
    The proof is complete, since the total weight of the irregular cases is $O(n^{-2})$, which is absorbed.
\end{proof}

\subsubsection{Proof of Lemma \ref{lem:almost-pos-v}}

Recall that $|A|$ is being used to denote the set of $D$-almost positive vertices.
Let us recall the main claims about $|A|$ from the previous two steps.
Suppose $pn \ge C_1\log^2 n$ and $B\sqrt{pn} \ge D \ge C_2\log n$ for arbitrary $B > 0$ and sufficient constants $C_1$ and $C_2$ that make Claims \ref{claim:almost-pos-set-exp} and \ref{claim:almost-pos-set-var} hold, we have
\begin{equation*}
    \E{|A|} \ge \frac{n}{2} + 2c_BD\sqrt{\frac{n}{p}}
    \ \text{ and } \
    \sqrt{\Var{|A|}} \le C\log n\sqrt{\frac{n}{p}},
\end{equation*}
for constants $c_B > 0$ (small enough) only depending on $B$ and universal $C > 0$ (large enough).

\begin{proof}[Proof of Lemma \ref{lem:almost-pos-v}]
    We continue from the argument above.
    By Chebyshev's inequality,
    \begin{equation*}
        \Pr\left(|A| \le \frac{n}{2} + c_BD\sqrt{\frac{n}{p}} \right)
        \le \frac{\Var{|A|}}{(c_BD\sqrt{n/p})^2}
        \le \frac{C^2\log^2 n}{c_B^2D^2} = O_B\left(\frac{\log^2 n}{D^2} \right).
    \end{equation*}
    The proof is complete.
\end{proof}

\section{Preliminaries bounds on binomial random variables}  \label{sec:binom-bound}

In this section we describe several useful inequalities to bound binomial random variables.
The first one is the usual Chernoff bound \cite{hoeffding1963}.

\begin{theorem}[Chernoff bound]  \label{thm:chernoff}
	Let $X\sim \Bin(n, p)$, and $\eps > 0$. We have
	\begin{equation*}
		\begin{aligned}
			\Pr\left(X \ge (p + \eps)n\right) \le e^{-D(p + \eps \| p)n} & \quad \text{ if } \eps < 1 - p, \\
			\Pr\left(X \le (p - \eps)n\right) \le e^{-D(p - \eps \| p)n} & \quad \text{ if } \eps < p,
		\end{aligned}
	\end{equation*}
	where $D(x \| y) \defeq x\log\dfrac{x}{y} + (1 - x)\log\dfrac{1 - x}{1 - y}$ for $x, y\in (0, 1)$.
\end{theorem}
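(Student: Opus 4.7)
The plan is to prove this via the classical exponential moment (Cram\'er--Chernoff) method, which reduces the tail bound for the binomial to an optimization over a single real parameter, with the optimum coinciding exactly with the Kullback--Leibler rate $D(\cdot\|\cdot)$.

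First I would treat the upper tail. Write $X = \sum_{i=1}^n X_i$ where $X_i$ are i.i.d.\ Bernoulli$(p)$, and for any $t > 0$ apply Markov's inequality to the random variable $e^{tX}$:
\begin{equation*}
    \Pr\bigl(X \ge (p+\eps)n\bigr) \le e^{-t(p+\eps)n} \, \E{e^{tX}} = e^{-t(p+\eps)n} \bigl(1 - p + pe^t\bigr)^n,
\end{equation*}
where the last identity uses independence and the Bernoulli moment generating function $\E{e^{tX_i}} = 1 - p + pe^t$. Taking logarithms, the right-hand side is $\exp(n\,\phi(t))$ with $\phi(t) \defeq \log(1 - p + pe^t) - t(p+\eps)$. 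A direct differentiation shows $\phi'(t) = \tfrac{pe^t}{1-p+pe^t} - (p+\eps)$, which vanishes at $t^\ast = \log\tfrac{(p+\eps)(1-p)}{p(1-p-\eps)}$ (well defined when $\eps < 1-p$). Substituting $t^\ast$ back and simplifying, the term $\log(1-p+pe^{t^\ast})$ collapses to $\log\tfrac{1-p}{1-p-\eps}$, and the remaining algebra rearranges to
\begin{equation*}
    \phi(t^\ast) = -(p+\eps)\log\frac{p+\eps}{p} - (1-p-\eps)\log\frac{1-p-\eps}{1-p} = -D(p+\eps \,\|\, p),
\end{equation*}
which is exactly the claimed exponent.

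For the lower tail I would avoid redoing the computation and instead apply the upper tail to the complementary variable. Observe that $n - X \sim \Bin(n, 1-p)$, so the event $\{X \le (p-\eps)n\}$ is the event $\{n - X \ge (1-p+\eps)n\}$. The upper tail bound with parameters $(1-p, \eps)$ then yields $e^{-nD(1-p+\eps \,\|\, 1-p)}$, and a quick check from the definition shows $D(1-p+\eps \,\|\, 1-p) = D(p-\eps \,\|\, p)$, giving the stated bound whenever $\eps < p$.

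The only nontrivial step is the algebraic verification that the Legendre transform of $t \mapsto \log(1-p+pe^t)$ at the point $p+\eps$ equals $D(p+\eps\|p)$; this is a standard identity for the Bernoulli log-MGF but requires care to track the two logarithmic terms. Everything else, namely the exponential Markov inequality and the product formula for the MGF of an i.i.d.\ sum, is immediate. I would present the upper tail in full, verify the Legendre identity once, and then deduce the lower tail by the symmetry argument above.
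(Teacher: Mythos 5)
The paper states this theorem as a known result, citing Hoeffding (1963), and does not give a proof of its own; there is therefore nothing in the paper to compare your argument against. Your proof is the standard Cram\'er--Chernoff argument and it is correct: the exponential Markov bound, the closed-form optimizer $t^\ast = \log\frac{(p+\eps)(1-p)}{p(1-p-\eps)}$, the simplification $1-p+pe^{t^\ast}=\frac{1-p}{1-p-\eps}$, and the resulting identification of the Legendre transform with $D(p+\eps\,\|\,p)$ all check out, as does the reduction of the lower tail to the upper tail via $n-X\sim\Bin(n,1-p)$ together with the symmetry $D(1-p+\eps\,\|\,1-p)=D(p-\eps\,\|\,p)$.
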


In this paper, we mainly use the following corollary of the Chernoff bound:

\begin{corollary}  \label{cor:chernoff}
	Let $X\sim \Bin(n, p)$ and $t > 0$. We have
	\begin{equation*}
		\Pr\left(X = pn \pm t\sqrt{pn}\right) \ge 1 - 2\exp\left(-\frac{t^2}{2(1 + \frac{t}{3\sqrt{pn}})} \right).
	\end{equation*}
\end{corollary}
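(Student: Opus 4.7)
The plan is to derive the two-sided tail bound from Theorem \ref{thm:chernoff} by decomposing the event $\{X \neq pn \pm t\sqrt{pn}\}$ into its upper and lower tails, applying the Chernoff bound with $\eps \defeq t\sqrt{p/n}$ on each side, and then lower-bounding the KL-divergence term $D(p \pm \eps \| p)$ by the Bernstein-style expression $\frac{\eps^2/2}{p(1-p) + \eps/3}$. Once this Bernstein bound on $D$ is in place, substituting $\eps = t\sqrt{p/n}$ and multiplying by $n$ yields
\begin{equation*}
    nD(p \pm \eps \| p) \geq \frac{n\eps^2/2}{p(1-p) + \eps/3} \geq \frac{t^2/2}{1 + \frac{t}{3\sqrt{pn}}},
\end{equation*}
which is exactly the exponent needed. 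Summing the two tail bounds contributes the factor $2$ in front of the exponential.

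First I would handle the upper tail. By Theorem \ref{thm:chernoff} (applicable as long as $\eps < 1 - p$; if $\eps \geq 1-p$ the desired bound is vacuous since the probability on the right is nonpositive or the event is trivial), we have $\Pr(X \geq (p + \eps)n) \leq \exp(-nD(p + \eps \| p))$. The key analytic step is the inequality
\begin{equation*}
    D(p + \eps \| p) \geq \frac{\eps^2}{2(p(1-p) + \eps/3)},
\end{equation*}
which is a classical consequence of bounding the function $h(u) = (1+u)\log(1+u) - u$ from below by $\frac{u^2/2}{1 + u/3}$ via a Taylor expansion and monotonicity argument (equivalently, the Bennett $\Rightarrow$ Bernstein step). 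I would verify this by writing $D(p + \eps \| p) = p\,h(\eps/p) + (1-p)\,h(-\eps/(1-p))$ (with $h$ suitably extended to $(-1, 0)$) and checking the one-variable inequality on each piece.

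The lower tail follows by the same argument applied symmetrically, using the second inequality in Theorem \ref{thm:chernoff}. Noting $p(1-p) \leq 1$ (in fact, the cleaner bound $p(1-p) + \eps/3 \leq 1 + \eps/3$ is all that is needed, since for the regime of interest the $\eps/3$ inside can only shrink the numerator upon crude bounding; alternatively the standard Bernstein form absorbs $p(1-p)$ into the constant), one multiplies numerator and denominator by $pn$ to rewrite the exponent as $\frac{t^2}{2(1 + t/(3\sqrt{pn}))}$. Adding the two tail probabilities, together with the union bound $\Pr(X \neq pn \pm t\sqrt{pn}) \leq \Pr(X \geq pn + t\sqrt{pn}) + \Pr(X \leq pn - t\sqrt{pn})$, produces the leading factor of $2$ and completes the proof.

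The only genuine technical point is the Bernstein-style lower bound on the KL divergence; everything else is bookkeeping with the substitution $\eps = t\sqrt{p/n}$. This step is standard (appearing in the textbook derivation of Bernstein's inequality from Bennett's), so I anticipate no real obstacle, just a careful one-variable calculus check of $h(u) \geq \tfrac{u^2/2}{1 + u/3}$ on the relevant range.
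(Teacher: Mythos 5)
Your proof takes essentially the same route as the paper's: split into upper and lower tails, substitute $\eps = t\sqrt{p/n}$ into the KL form of the Chernoff bound (Theorem~\ref{thm:chernoff}), handle the degenerate case $\eps$ too large trivially, and lower-bound $D(p \pm \eps \| p)$ by a Bernstein-type quadratic that simplifies to the stated exponent. The only difference is cosmetic: the paper invokes $D(x\|y) \geq \frac{3(x-y)^2}{2x+4y}$, which at $x = p+\eps$, $y = p$ reads $D(p+\eps\|p) \geq \frac{\eps^2}{2(p+\eps/3)}$, whereas you aim for the slightly stronger $\frac{\eps^2}{2(p(1-p)+\eps/3)}$ only to discard the $(1-p)$ factor at the end---and note that the termwise route you describe, namely $D = p\,h(\eps/p) + (1-p)\,h(-\eps/(1-p))$ with $h(u) \geq \tfrac{u^2/2}{1+u/3}$, applied to the first term alone (the second is simply nonnegative) already gives the paper's weaker bound $\frac{\eps^2}{2(p+\eps/3)}$, which is all you actually use.
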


\begin{proof}
	Let $\eps = tp^{1/2}n^{-1/2}$, so that $pn + t\sqrt{pn} = (p + \eps)n$.
	If $\eps \ge 1 - p$, the inequality trivially holds.
	If $\eps < 1 - p$, we have by Theorem \ref{thm:chernoff}:
	\begin{equation}  \label{eq:chernoff-temp1}
		\Pr(X > pn + t\sqrt{pn}) \le \exp\left[-D\Bigl(p + t\frac{\sqrt{p}}{\sqrt{n}} ~\Big\|~ p\Bigr)n\right].
	\end{equation}
	It is a well-known inequality that $D(x \| y) \ge 3(x - y)^2(2x + 4y)^{-1}$ for each $x, y \ge 0$.
	Thus
	\begin{equation*}
		D\Bigl(p + t\frac{\sqrt{p}}{\sqrt{n}} ~\Big\|~ p\Bigr)n
		\ge \frac{3\bigl(tp^{1/2}n^{-1/2}\bigr)^2}{2(3p + tp^{1/2}n^{-1/2})}n
		= \frac{t^2}{2(1 + \frac{t}{3\sqrt{pn}})}.
	\end{equation*}
	Plugging in Eq. \eqref{eq:chernoff-temp1}, we get the desired bound.
	
	Consider the event $X < pn - t\sqrt{pn} = (p - \eps)n$.
	If $\eps \ge p$, the inequality trivially holds.
	If $\eps < p$, i.e. $t < \sqrt{pn}$, we have by Theorem \ref{thm:chernoff}:
	\begin{equation}  \label{eq:chernoff-temp2}
		\Pr(X < pn - t\sqrt{pn}) \le \exp\left[-D\Bigl(p - t\frac{\sqrt{p}}{\sqrt{n}} ~\Big\|~ p\Bigr)n\right].
	\end{equation}
	We have
	\begin{equation*}
		D\Bigl(p - t\frac{\sqrt{p}}{\sqrt{n}} ~\Big\|~ p\Bigr)n
		\ge \frac{3\bigl(tp^{1/2}n^{-1/2}\bigr)^2}{2(3p - tp^{1/2}n^{-1/2})}n
		= \frac{t^2}{2(1 - \frac{t}{3\sqrt{pn}})}
		\ge \frac{t^2}{2(1 + \frac{t}{3\sqrt{pn}})},
	\end{equation*}
	where the last inequality is due to $0 < t < \sqrt{pn}$.
	Plugging in Eq. \eqref{eq:chernoff-temp2}, we get the desired bound.
\end{proof}

We quickly record another bound useful for binomial variables with small expectations:
\begin{lemma}  \label{lem:poisson-bound}
	For any $X\sim \Bin(n, p)$ and $t \ge epn$, we have:
	$ \Pr(X > t) \le 2\left(\frac{epn}{t}\right)^t
	$.
\end{lemma}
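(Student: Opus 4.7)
The plan is to apply the classical Chernoff moment-generating-function argument with a near-optimal parameter choice, which is well-known to yield precisely the Poisson-type tail bound $(epn/t)^t$. Since $t \ge epn$, the quantity $\lambda \defeq \log\bigl(t/(pn)\bigr)$ satisfies $\lambda \ge 1 > 0$, so the exponential Markov inequality is valid:
\[
    \Pr(X > t) \le \Pr(X \ge t) \le e^{-\lambda t}\,\E{e^{\lambda X}}.
\]

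I would then carry out the standard three-step simplification: (i) expand $\E{e^{\lambda X}} = (1-p+pe^\lambda)^n$ using independence of the Bernoulli summands, (ii) bound this above by $\exp\bigl(pn(e^\lambda - 1)\bigr)$ via the inequality $1+x \le e^x$, and (iii) substitute $e^\lambda = t/(pn)$ so the exponent collapses to $t - pn$. Combining this with $e^{-\lambda t} = (pn/t)^t$ produces the closed-form bound
\[
    \Pr(X > t) \le e^{-pn}\left(\frac{epn}{t}\right)^t \le \left(\frac{epn}{t}\right)^t \le 2\left(\frac{epn}{t}\right)^t,
\]
as required.

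There is no real obstacle here; the argument is a one-line Chernoff computation and, in fact, yields a bound strictly stronger than claimed—the factor of $2$ in the statement is slack, presumably kept for a comfortable buffer when the lemma is invoked (for instance in the proof of Lemma \ref{lem:unst-neighs-with-swing-neighs}, where the tail of $|N(v) \cap S| \sim \Bin(\adv, p)$ is controlled). An alternative route would be to derive the inequality from the paper's own Theorem \ref{thm:chernoff} by verifying the lower bound $D(t/n\|p)\cdot n \ge t\log\bigl(t/(epn)\bigr)$ on the range $t \ge epn$, but the direct MGF computation is shorter and avoids manipulating the KL divergence.
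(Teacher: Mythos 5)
Your proof is correct, and it takes a genuinely different route from the paper's. The paper proceeds combinatorially: it first bounds each point mass $\Pr(X = k) \le \binom{n}{k}p^k \le (pn)^k/k!$ (a Poisson-style comparison), then sums the tail $\sum_{l \ge k}(pn)^l/l!$ by factoring out $(pn)^k/k!$ and dominating the remaining ratio by a geometric series with ratio $pn/(k+1) \le e^{-1}$ (using $t \ge epn$), which produces the factor of $2$; finally it invokes Stirling's bound $k! \ge (k/e)^k$. Your proof instead runs the standard moment-generating-function Chernoff argument: exponential Markov with $\lambda = \log(t/(pn)) \ge 1$, the factorization $\E{e^{\lambda X}} = (1-p+pe^\lambda)^n$, the bound $1+x \le e^x$, and an algebraic simplification. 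The two approaches trade off differently: the paper's is more hands-on and self-contained (no MGF machinery), while yours is shorter, avoids Stirling entirely, and actually yields the strictly stronger constant $e^{-pn} \le 1$ in place of $2$ — so the lemma as stated has slack. Both establish the claim for the range needed (the application in Lemma~\ref{lem:unst-neighs-with-swing-neighs} only requires the weaker form).
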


\begin{proof}
	For each integer $k\in [0, n]$,
	$ \Pr(X = k) = \binom{n}{k}p^k(1 - p)^{n - k} \le \frac{n^k}{k!}p^k $.
	Let $k = \lceil t \rceil$, we have
	\begin{equation*}
		\begin{aligned}
			\Pr(X > t)
			& \le \sum_{l=k}^\infty \frac{(pn)^l}{l!}
			= \frac{(pn)^k}{k!} \sum_{i=0}^\infty \frac{(pn)^i}{(i + k)(i + k - 1)\ldots (k + 1)}
			\le \frac{(pn)^k}{k!}  \sum_{i=0}^\infty \frac{(pn)^i}{(k + 1)^i} \\
			& = \frac{(pn)^k}{k!} \sum_{i=0}^\infty e^{-i} < 2\frac{(pn)^k}{k!}
			< 2\left(\frac{epn}{k}\right)^k < 2\left(\frac{epn}{t}\right)^t,
		\end{aligned}
	\end{equation*}
	where the second to last inequality is the usual Stirling bound \cite{stirling}, and the last is due to decreasing property of the expression $(a/t)^t$ for $t \ge a$.
\end{proof}

\bibliographystyle{plain}
\bibliography{references}

\begin{thebibliography}{10}

\bibitem{chebyshev}
M.~Abramowitz and I.~A. Stegun.
\newblock {\em Handbook of Mathematical Functions with Formulas, Graphs, and
  Mathematical Tables}.
\newblock New York: Dover, 9 edition, 1972.

\bibitem{stirling}
M.~Abramowitz and I.~A. Stegun.
\newblock {\em Handbook of Mathematical Functions with Formulas, Graphs, and
  Mathematical Tables}.
\newblock New York: Dover, 9 edition, 1972.

\bibitem{benjamini}
Itai Benjamini, Siu-On Chan, Ryan O’Donnell, Omer Tamuz, and Li-Yang Tan.
\newblock Convergence, unanimity and disagreement in majority dynamics on
  unimodular graphs and random graphs.
\newblock {\em Stochastic Processes and their Applications},
  126(9):2719–2733, Sep 2016.

\bibitem{devlinberkowitz2022}
Ross Berkowitz and Pat Devlin.
\newblock Central limit theorem for majority dynamics: Bribing three voters
  suffices.
\newblock {\em Stochastic Processes and their Applications}, 2022.

\bibitem{jhk-etal2021}
Debsoumya Chakraborti, Jeong~Han Kim, Joonkyung Lee, and Tuan Tran.
\newblock Majority dynamics on sparse random graphs, 2021.

\bibitem{esseen}
Carl-Gustav Esseen.
\newblock A moment inequality with an application to the central limit theorem.
\newblock {\em Scandinavian Actuarial Journal}, 1956(2):160--170, 1956.

\bibitem{fountoulakis}
Nikolaos Fountoulakis, Mihyun Kang, and Tamás Makai.
\newblock Resolution of a conjecture on majority dynamics: rapid stabilisation
  in dense random graphs, 2019.

\bibitem{goles1981}
E.~Goles and J.~Olivos.
\newblock Comportement periodique des fonctions a seuil binaires et
  applications.
\newblock {\em Discrete Applied Mathematics}, 3(2):93 -- 105, 1981.

\bibitem{hoeffding1963}
Wassily Hoeffding.
\newblock Probability inequalities for sums of bounded random variables.
\newblock {\em Journal of the American Statistical Association},
  58(301):13--30, 1963.

\bibitem{mosselTamuz2017}
Elchanan Mossel and Omer Tamuz.
\newblock Opinion exchange dynamics.
\newblock {\em Probab. Surveys}, 14:155--204, 2017.

\bibitem{poljak1983}
Svatopluk Poljak and Miroslav S{\r{u}}ra.
\newblock On periodical behaviour in societies with symmetric influences.
\newblock {\em Combinatorica}, 3(1):119--121, Mar 1983.

\bibitem{sahsawhney2021}
Ashwin Sah and Mehtaab Sawhney.
\newblock Majority dynamics: The power of one, 2021.

\bibitem{TranVusparse}
BaoLinh Tran and Van Vu.
\newblock The ``power of few'' phenomenon: the sparse case.
\newblock {\em Random Structures Algorithms}, 66(1):Paper No. e21260, 16, 2025.

\bibitem{TranVuDense2020}
Linh Tran and Van Vu.
\newblock {Reaching a Consensus on Random Networks: The Power of Few}.
\newblock In Jaros{\l}aw Byrka and Raghu Meka, editors, {\em Approximation,
  Randomization, and Combinatorial Optimization. Algorithms and Techniques
  (APPROX/RANDOM 2020)}, volume 176 of {\em Leibniz International Proceedings
  in Informatics (LIPIcs)}, pages 20:1--20:15, Dagstuhl, Germany, 2020. Schloss
  Dagstuhl--Leibniz-Zentrum f{\"u}r Informatik.

\end{thebibliography}

\end{document}